\documentclass[12pt]{article}
\usepackage{amsmath,amssymb,amsthm,amscd,a4wide,upref}
\usepackage[enableskew]{youngtab}
\usepackage[colorlinks=true, linkcolor=blue, citecolor=magenta, urlcolor=blue, backref=page]{hyperref} 

\hypersetup{colorlinks,citecolor=blue,filecolor=black,linkcolor=blue,urlcolor=blue}
\usepackage{enumerate}
\usepackage{mathtools}
\usepackage{enumitem}

\usepackage{lmodern}     
\usepackage[T1]{fontenc}
\usepackage{indentfirst}

\begin{document}

\newcommand{\End}{{\rm{End}\ts}}
\newcommand{\Hom}{{\rm{Hom}}}
\newcommand{\Inv}{{\rm{Inv}}}
\newcommand{\Mat}{{\rm{Mat}}}
\newcommand{\ch}{{\rm{ch}\ts}}
\newcommand{\sh}{{\rm{sh}}}
\newcommand{\chara}{{\rm{char}\ts}}
\newcommand{\diag}{ {\rm diag}}
\newcommand{\non}{\nonumber}
\newcommand{\wt}{\widetilde}
\newcommand{\wh}{\widehat}
\newcommand{\ot}{\otimes}
\newcommand{\la}{\lambda}
\newcommand{\La}{\Lambda}
\newcommand{\De}{\Delta}
\newcommand{\al}{\alpha}
\newcommand{\be}{\beta}
\newcommand{\ga}{\gamma}
\newcommand{\Ga}{\Gamma}
\newcommand{\ep}{\epsilon}
\newcommand{\ka}{\kappa}
\newcommand{\vk}{\varkappa}
\newcommand{\si}{\sigma}
\newcommand{\vp}{\varphi}
\newcommand{\de}{\delta}
\newcommand{\ze}{\zeta}
\newcommand{\om}{\omega}
\newcommand{\ee}{\epsilon^{}}
\newcommand{\su}{s^{}}
\newcommand{\hra}{\hookrightarrow}
\newcommand{\ve}{\varepsilon}
\newcommand{\ts}{\,}
\newcommand{\vac}{\mathbf{1}}
\newcommand{\vacr}{|\tss 0\rangle}
\newcommand{\vacl}{\langle 0\tss |}
\newcommand{\di}{\partial}
\newcommand{\qin}{q^{-1}}
\newcommand{\tss}{\hspace{1pt}}
\newcommand{\Sr}{ {\rm S}}
\newcommand{\U}{ {\rm U}}
\newcommand{\BL}{ {\overline L}}
\newcommand{\BE}{ {\overline E}}
\newcommand{\BP}{ {\overline P}}
\newcommand{\Ab}{\mathbb{A}\tss}
\newcommand{\CC}{\mathbb{C}\tss}
\newcommand{\KK}{\mathbb{K}\tss}
\newcommand{\QQ}{\mathbb{Q}\tss}
\newcommand{\SSb}{\mathbb{S}\tss}
\newcommand{\ZZ}{\mathbb{Z}\tss}
\newcommand{\X}{ {\rm X}}
\newcommand{\Y}{ {\rm Y}}
\newcommand{\Z}{{\rm Z}}
\newcommand{\Ac}{\mathcal{A}}
\newcommand{\achi}{\Ac_{\chi}}
\newcommand{\bachi}{\overline\Ac_{\chi}}
\newcommand{\Lc}{\mathcal{L}}
\newcommand{\ol}{\overline}
\newcommand{\Pc}{\mathcal{P}}
\newcommand{\Qc}{\mathcal{Q}}
\newcommand{\Tc}{\mathcal{T}}
\newcommand{\Sc}{\mathcal{S}}
\newcommand{\Bc}{\mathcal{B}}
\newcommand{\Dc}{\mathcal{D}}
\newcommand{\Ec}{\mathcal{E}}
\newcommand{\Oc}{\mathcal{O}}
\newcommand{\Fc}{\mathcal{F}}
\newcommand{\Hc}{\mathcal{H}}
\newcommand{\Uc}{\mathcal{U}}
\newcommand{\Vc}{\mathcal{V}}
\newcommand{\Wc}{\mathcal{W}}
\newcommand{\Yc}{\mathcal{Y}}
\newcommand{\Mc}{\mathcal{M}}
\newcommand{\Kc}{\mathcal{K}}
\newcommand{\Ar}{{\rm A}}
\newcommand{\Br}{{\rm B}}
\newcommand{\Ir}{{\rm I}}
\newcommand{\Fr}{{\rm F}}
\newcommand{\Jr}{{\rm J}}
\newcommand{\Or}{{\rm O}}
\newcommand{\GL}{{\rm GL}}
\newcommand{\Spr}{{\rm Sp}}
\newcommand{\Rr}{{\rm R}}
\newcommand{\Zr}{{\rm Z}}
\newcommand{\gl}{\mathfrak{gl}}
\newcommand{\middd}{{\rm mid}}
\newcommand{\ev}{{\rm ev}}
\newcommand{\Pf}{{\rm Pf}}
\newcommand{\Norm}{{\rm Norm\tss}}
\newcommand{\oa}{\mathfrak{o}}
\newcommand{\spa}{\mathfrak{sp}}
\newcommand{\osp}{\mathfrak{osp}}
\newcommand{\g}{\mathfrak{g}}
\newcommand{\h}{\mathfrak h}
\newcommand{\n}{\mathfrak n}
\newcommand{\z}{\mathfrak{z}}
\newcommand{\Zgot}{\mathfrak{Z}}
\newcommand{\p}{\mathfrak{p}}
\newcommand{\sll}{\mathfrak{sl}}
\newcommand{\agot}{\mathfrak{a}}
\newcommand{\qdet}{ {\rm qdet}\ts}
\newcommand{\Ber}{ {\rm Ber}\ts}
\newcommand{\HC}{ {\mathcal HC}}
\newcommand{\cdet}{ {\rm cdet}}
\newcommand{\tr}{ {\rm tr}}
\newcommand{\gr}{ {\rm gr}}
\newcommand{\str}{ {\rm str}}
\newcommand{\loc}{{\rm loc}}
\newcommand{\Gr}{{\rm G}}
\newcommand{\sgn}{ {\rm sgn}\ts}
\newcommand{\ba}{\bar{a}}
\newcommand{\bb}{\bar{b}}
\newcommand{\bi}{\bar{\imath}}
\newcommand{\bj}{\bar{\jmath}}
\newcommand{\bk}{\bar{k}}
\newcommand{\bl}{\bar{l}}
\newcommand{\hb}{\mathbf{h}}
\newcommand{\Sym}{\mathfrak S}
\newcommand{\fand}{\quad\text{and}\quad}
\newcommand{\Fand}{\qquad\text{and}\qquad}
\newcommand{\For}{\qquad\text{or}\qquad}
\newcommand{\OR}{\qquad\text{or}\qquad}

\renewcommand{\theequation}{\arabic{section}.\arabic{equation}}

\newtheorem{thm}{Theorem}[section]
\newtheorem{lem}[thm]{Lemma}
\newtheorem{prop}[thm]{Proposition}
\newtheorem{cor}[thm]{Corollary}
\newtheorem{conj}[thm]{Conjecture}
\newtheorem*{mthm}{Main Theorem}
\newtheorem*{mthma}{Theorem A}
\newtheorem*{mthmb}{Theorem B}

\theoremstyle{definition}
\newtheorem{defin}[thm]{Definition}

\theoremstyle{remark}
\newtheorem{remark}[thm]{Remark}
\newtheorem{example}[thm]{Example}

\newcommand{\bth}{\begin{thm}}
\renewcommand{\eth}{\end{thm}}
\newcommand{\bpr}{\begin{prop}}
\newcommand{\epr}{\end{prop}}
\newcommand{\ble}{\begin{lem}}
\newcommand{\ele}{\end{lem}}
\newcommand{\bco}{\begin{cor}}
\newcommand{\eco}{\end{cor}}
\newcommand{\bde}{\begin{defin}}
\newcommand{\ede}{\end{defin}}
\newcommand{\bex}{\begin{example}}
\newcommand{\eex}{\end{example}}
\newcommand{\bre}{\begin{remark}}
\newcommand{\ere}{\end{remark}}
\newcommand{\bcj}{\begin{conj}}
\newcommand{\ecj}{\end{conj}}

\newcommand{\bal}{\begin{aligned}}
\newcommand{\eal}{\end{aligned}}
\newcommand{\beq}{\begin{equation}}
\newcommand{\eeq}{\end{equation}}
\newcommand{\ben}{\begin{equation*}}
\newcommand{\een}{\end{equation*}}

\newcommand{\bpf}{\begin{proof}}
\newcommand{\epf}{\end{proof}}
\newcommand{\whg}{\widehat{\mathfrak{g}}}   

\def\beql#1{\begin{equation}\label{#1}}

\title{\Large\bf Quantum super Manin matrices}

\author{{Naihuan Jing,\ \ Yinlong Liu,\ \  Jian Zhang\ \  }}

\date{} 
\maketitle

\vspace{4 mm}

\begin{abstract}
Manin matrices are quantum linear transformations of general quantum spaces.
In this paper, we study the $q$-analogue of super Manin matrices and
obtain several quantum versions of classical identities, such as
Jacobi's ratio theorem, Schur's complement theorem, Cayley's complementary theorem, Muir's law, Sylvester's theorem,
MacMahon Master Theorem and Newton's identities.

\end{abstract}


\vspace{5 mm}

%


\section{Introduction}
\setcounter{equation}{0}

The theory of matrices with noncommutative entries has played a significant role in the development of quantum groups and integrable systems. Yuri Manin introduced a particularly fruitful class of such matrices in the late 1980s ~\cite{Ma1,Ma2}. These are defined as noncommutative quadratic algebras generalizing linear maps for vector spaces, where
quadratic algebras replace the vector spaces with specific commutation relations. The corresponding homomorphisms between such algebras are described by matrices whose entries satisfy certain commutation relations, and this defines the so-called  Manin matrices. Since their inception, Manin matrices have found important applications in quantum groups and
quantum integrable systems, e.g.,  Talalaev's quantum spectral curve formula~\cite{Ta,CT,CF}.

Manin's original motivation was to understand the structure of the quantum plane, defined by the relation \( yx = qxy \), and its endomorphisms, which form the backbone of a quantum group. This was later extended to \( n \) generators with relations \( x_j x_i = q x_i x_j \) for \( i < j \), leading to the notion of  $q$-Manin matrices. Such matrices exhibit remarkable properties: despite noncommutativity of their entries, they admit a natural generalization of many classical linear algebra constructions, such as determinants, minors, Cayley--Hamilton theorem, Cramer's rule, and Capelli-type identities, etc. ~\cite{CSS, CFR, CFRS}.

A further generalization arises when one considers \emph{pairs} of quadratic algebras whose commutation relations are defined by a pair of idempotents \( (A, B) \), where the homomorphisms between them are encoded by matrices satisfying commutation relations given by the internal hom algebra~\cite{Si}. This leads to the concept of \emph{$(A, B)$-Manin matrices}, where \( A \) and \( B \) are idempotents defining the quadratic algebras. When \( A = B \), one recovers the classical case of endomorphisms. This broader framework allows for a unified treatment of various deformed matrix structures, including multi-parametric Manin matrices~\cite{JLZ} and Molev's type \( B, C, D \) Manin matrices~\cite{Mo2, Si1}.

In the super-algebraic setting, Manin also introduced multi-parametric quantum deformations of Lie group \cite{Ma3}
together with its coactions on quantum symmetric and exterior superalgebra.
Hai et al. \cite{HKL} gave some examples of homogeneous superalgebras whose generator matrices are super Manin matrix. Later, Molev and Ragoucy \cite{MR} proved super analogues of the MacMahon Master Theorem and Newton's identities for super Manin matrices and constructed higher order Sugawara operators for the affine Lie superalgebra $\widehat{\mathfrak{gl}}_{m|n}$.

In this paper, we introduce the notion of
$q$-super Manin matrices by contructing the bialgebra associated with the quantum symmetric (resp. exterior) superalgebra, which incorporate both $q$-deformations and super-algebraic structures. Our approach is based on the tensor formalism developed in~\cite{Ma1,Ma2}, where quadratic (super)algebras are defined by idempotents in tensor spaces. We show that these matrices admit natural analogues of determinants and minors, which are defined as quantum Berezinian of q-super Manin matrices.
Moreover, we derive $q$-super-analogs of  Jacobi's ratio theorem, Schur's complement theorem, Cayley's complementary theorem, Muir's law and Sylvester's theorem by using the quasideterminant.

\section{Quantum super Manin matrices}
\setcounter{equation}{0}
We will introduce the $q$-super Manin matrices as the generator matrix of the bialgebra associated with the quantum symmetric superalgebra of a $\ZZ_2$-graded vector space. In the super case with $q=1$, this bialgebra specializes to the right quantum superalgebra defined by Molev and Ragoucy \cite{MR}.
This formation can be viewed as a generalization of the Faddeev-Reshetikhin-Tacktajan formulism of the quantum group \cite{RTF}
in the super case.

Let $\CC^{m|n}$  be the $\ZZ_2$-graded vector space with the basis $e_1,\ldots,e_{m+n}$ such that the parity of $e_i$ is defined as $\bar{e}_i=\bar{i}$ and set
\[
\bar{i}=\left\{\begin{array}{cc}
0&\text{ if }i\leq m,\\
1&\text{ if }i> m.
\end{array}\right.
\]

Let $q_i=q^{1-2\bar i}$ for $i\in [m+n]=\{1,\ldots,m+n\}$. In the superalgebra $\End(\CC^{m|n})^{\ot r}$, $r\geq 2$, for any $1 \leq a \leq r-1$ we define
\begin{equation}\label{P}
  P^q =\sum_{i,j=1}^{m+n}q^{\ve(i-j)}(-1)^{\bar{j}} E_{ij}\ot E_{ji}  .
\end{equation}
where $E_{ij}$ are the standard basis of $\mathrm{End}(\mathbb{C}^{m|n})$ with the parity $\bar i +\bar j$
and the symbol $\ve$  is defined by
\ben
\ve (i-j)
= \left\{ \begin{aligned}
1\ \ &\text{ if }\ i>j,\\
0\ \ &\text{ if }\ i=j,\\
-1\ \ &\text{ if }\ i<j.\\
\end{aligned} \right.
\een
Then
\beq
\bal
(P^q )^{2}&=1,\\
P^q_{12}P^q_{23}P^q_{12}&=P^q_{23}P^q_{12}P^q_{23}.
\eal
\eeq

Let    $S^q(V)=T(V)/R$  be the quantum symmetric superalgebra  generated by $x_1, \ldots, x_{m+n}$ with parity $\bar{x}_i=\bar{i}$  satisfying the quadratic relation:
\beq\label{symmetric-superalg}
(1-P^q)X\ot X=0,
\eeq
where $X=(x_1,\ldots,x_{m+n})^t$.
Explicitly the quadratic relation \eqref{symmetric-superalg} is written as
\beq\label{symmetric-rela}
x_i^2=0, \ \text{if} \ \bar{i}=1; \ \ \ x_j x_i-q^{\ve(j-i)}(-1)^{\bar{i}\bar{j}}x_ix_j=0.
\eeq

Define the quantum exterior superalgebra $\Lambda^q(V^*)=T(V^*)/R'$ on the dual space $V^*$ is generated by $\psi_{1},\ldots,\psi_{m+n}$ with  parity $\bar{\psi}_i=1-\bar{i}$ satisfying the quadratic relation:
\beq\label{exterior-superalg}
(\Psi\otimes\Psi)(1+P^q)=0,
\eeq
where  $ \Psi=(\psi_1, \ldots, \psi_{m+n})$. This quadratic relation \eqref{exterior-superalg} can be written entry-wise as
\beq\label{exterior-rela}
\psi_i^2=0, \ \text{if} \ \bar{i}=0; \ \ \ \psi_k \psi_l-q^{\ve(l-k)}(-1)^{(\bar{k}+1)(\bar{l}+1)}\psi_l \psi_k=0.
\eeq
The two quadratic algebras form a dual pair with a bilinear form $\langle \ |\ \rangle:\Lambda^q(V^*) \ot S^q(V) \rightarrow \CC$ by $\langle  \psi_{j} | x_i \rangle= \delta_{ij}$ and $\langle  \psi_{k}\otimes \psi_{l} | x_i \otimes x_j \rangle= (-1)^{\bar{i}(\bar{l}+1)} \delta_{ik}\delta_{jl}$.


From the construction of bialgebra associated with an arbitrary quadratic algebra or superalgebra given by Manin \cite{Ma1,Ma2},
we can define a superalgebra $\mathcal{M}^q_{m|n}=T(V^*\ot V)/\langle \tau_{23}(R'\ot R)\rangle$ called the {\it right quantum superalgebra}, where $\tau(a\ot b)=(-1)^{\bar{a}\bar{b}}b \ot a$, is generated by $M_{ij}, 1 \leq i,j \leq m+n$ with parity $\bar{i}+\bar{j}$ and satisfies the quadratic relations:
\beq\label{M-rel}
\bal
&M_{ij}^2=0, \ \text{if} \ \bar{i}=1, \bar{j}=0;\\
&M_{ik}M_{il}-q^{\ve(l-k)}(-1)^{(\bar{k}+1)(\bar{l}+1)}M_{il}M_{ik}=0 , \ \text{if} \ \bar{i}=1;\\
&M_{ik}M_{jk}-q^{\ve(i-j)}(-1)^{\bar{i}\bar{j}}M_{jk}M_{ik}=0 , \ \text{if} \ \bar{k}=0;\\
&M_{ij}M_{kl}-q^{\ve(i-k)+\ve(l-j)}(-1)^{(\bar{i}+\bar{j})(\bar{k}+\bar{l})}M_{kl}M_{ij} \\
&= q^{\ve(i-k)}(-1)^{\bar{i}\bar{k}+\bar{j}\bar{k}+\bar{i}\bar{j}}M_{kj}M_{il}-
q^{\ve(l-j)}(-1)^{\bar{j}\bar{k}+\bar{j}\bar{l}+\bar{k}\bar{l}}M_{il}M_{kj} ,\ \ 1 \leq i,j,k,l \leq m+n.
\eal
\eeq
For simplicity we usually refer $\mathcal{M}^q_{m|n}$ as $\mathcal{M}_{m|n}$ if $q$ is clear from the context.
The algebra  $\mathcal{M}_{m|n}$ is a bialgebra under the comultiplication
 $\mathcal{M}_{m|n}\rightarrow \mathcal{M}_{m|n}\ot \mathcal{M}_{m|n}$
 defined by
\begin{equation}
\Delta(M_{ij})=\sum_{k=1}^{m+n} M_{ik}\otimes M_{kj},
\end{equation}
and the counit given by $\varepsilon(M_{ij})=\delta_{ij}$.

Let $M=(M_{ij})$ be the $(m+n)\times (m+n)$ generator matrix. Define $Y=MX$ and $\Phi=\Psi M$, i.e.
\begin{equation}\label{e:y-phi}
y_i=\sum_{j=1}^{m+n}M_{ij}x_j, \qquad \phi_j=\sum_{i=1}^{m+n}\psi_iM_{ij}.
\end{equation}
\begin{prop}\label{Manin-equi-rela}
The quadratic relations \eqref{M-rel} of the bialgebra $\Mc_{m|n}$ are  equivalent to one of
the following three conditions:
\begin{align}\label{relation manin matrix}
&(1-P^q) M_{1}M_{2}(1+P^q)=0,\\
&(1-P^q)(Y\otimes Y)=0,\\
&(\Phi\otimes\Phi)(1+P^q)=0,
\end{align}
where $M_1=\sum_{i,j}(-1)^{\bar{i}\bar{j}+\bar{j}} M_{ij}\ot E_{ij} \ot 1$, $M_2=\sum_{i,j}(-1)^{\bar{i}\bar{j}+\bar{j}} M_{ij} \ot 1 \ot E_{ij}$.
\end{prop}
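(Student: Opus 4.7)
The plan is to show that each of the three tensor identities, when expanded in an appropriate basis, produces exactly the system \eqref{M-rel}. Since all three then characterize the same ideal in the free superalgebra on the $M_{ij}$, they are mutually equivalent.

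I would begin with the vector identity $(1-P^q)(Y\otimes Y)=0$. Using $y_i=\sum_k M_{ik}x_k$ together with the super-tensor sign rule $(a_1\ot b_1)(a_2\ot b_2)=(-1)^{\bar b_1\bar a_2}\,a_1a_2\ot b_1b_2$, one writes
\[
Y\ot Y=\sum_{i,j,k,l}(-1)^{\bar j(\bar i+\bar k)+\bar i\bar k}M_{ik}M_{jl}\ot x_k\ot x_l.
\]
Applying the explicit formula \eqref{P} for $P^q$ and then grouping the result by the (linearly independent, modulo \eqref{symmetric-rela}) basis elements $x_k\ot x_l$ with $k\le l$, each resulting coefficient is a quadratic expression in the $M_{ij}$ that must vanish. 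A case analysis on the parities $\bar i,\bar j,\bar k,\bar l$ produces, in turn, the four families of relations listed in \eqref{M-rel}. Conversely, starting from \eqref{M-rel} one recovers the vanishing of $(1-P^q)(Y\ot Y)$ by the same computation read backwards.

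Next I would carry out the dual computation for $(\Phi\ot\Phi)(1+P^q)=0$. Here one uses $\phi_j=\sum_i\psi_i M_{ij}$ together with \eqref{exterior-rela}; the sign discrepancy between the symmetric and exterior sides, namely $(-1)^{\bar k\bar l}$ versus $(-1)^{(\bar k+1)(\bar l+1)}$, is compensated precisely by the replacement $(1-P^q)\leadsto (1+P^q)$. Expanding and comparing coefficients of the basis $\psi_k\ot\psi_l$ yields, once again, the same relations \eqref{M-rel}. Finally, for the matrix identity $(1-P^q)M_1M_2(1+P^q)=0$, which lives in $\End(\CC^{m|n})^{\ot 2}\ot\Mc_{m|n}$, I would read off its $(E_{ij}\ot E_{kl})$-coefficients directly from the definitions of $M_1,M_2$. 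The auxiliary signs $(-1)^{\bar i\bar j+\bar j}$ built into $M_1$ and $M_2$ are precisely those forced by the super-tensor rule so that the matrix entries of $(1-P^q)M_1M_2(1+P^q)$ coincide, up to nonzero scalar factors, with the left-hand sides of \eqref{M-rel}. This gives the third equivalence.

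The main obstacle will be purely bookkeeping: three independent sources of signs and $q$-factors must be tracked simultaneously — the super-grading signs $(-1)^{\bar a\bar b}$ from the tensor product, the factors $q^{\ve(i-j)}$ from $P^q$, and the extra twist $(-1)^{\bar j}$ built into \eqref{P} — and the verification must be carried out case by case according to the parities of the four indices $i,j,k,l$ and to whether equalities among them occur (e.g.\ $i=j$ or $k=l$, which govern the degenerate relations $M_{ij}^2=0$). Once these case distinctions are organized, each expansion collapses to \eqref{M-rel} term by term, and the three stated equivalences follow uniformly.
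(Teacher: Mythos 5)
The paper states Proposition~\ref{Manin-equi-rela} without proof, so there is nothing internal to compare your argument against; your proposal supplies the standard verification that the paper leaves implicit, and the overall strategy --- expand each tensor identity, reduce modulo the relations of $S^q(V)$ (resp.\ $\Lambda^q(V^*)$), and match coefficients against \eqref{M-rel} by a case analysis on the parities of $i,j,k,l$ --- is the right one and would succeed. Two points deserve care before this counts as a proof rather than a plan. First, none of the four families in \eqref{M-rel} is actually derived, and the one intermediate formula you do display looks suspect: separating $M_{ik}x_kM_{jl}x_l$ into $M_{ik}M_{jl}\ot x_k\ot x_l$ costs the Koszul sign $(-1)^{\bar k(\bar j+\bar l)}$ from moving $x_k$ past $M_{jl}$, which produces $\bar k\bar l$ in the exponent where you have written $\bar i\bar k$; since the whole proposition is an exercise in signs, this needs to be fixed or justified by an explicit convention for $Y\ot Y$. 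Second, for the implication ``$(1-P^q)(Y\ot Y)=0\Rightarrow\eqref{M-rel}$'' you must extract coefficients against an actual basis of the degree-two component of $S^q(V)$, namely $x_kx_l$ with $k<l$ together with $x_k^2$ for $\bar k=0$ only (your ``$k\le l$'' overcounts the odd squares); the injectivity of this extraction is what guarantees you recover \emph{all} of \eqref{M-rel} and not a weaker system. Finally, you could save most of the bookkeeping in relating the three conditions to one another: since $(P^q)^2=1$, the space $\CC^{m|n}\ot\CC^{m|n}$ splits as $\mathrm{im}(1+P^q)\oplus\mathrm{im}(1-P^q)$, the components of $X\ot X$ in $S^q(V)_2$ span exactly $\mathrm{im}(1+P^q)=\ker(1-P^q)$, and $(1-P^q)(Y\ot Y)=(1-P^q)M_1M_2(X\ot X)$; hence condition (2) is literally condition (1) tested on a spanning set of $\mathrm{im}(1+P^q)$, and dually for (3). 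This replaces two of your three coefficient computations by a one-line linear-algebra observation and leaves only the single expansion of $(1-P^q)M_1M_2(1+P^q)$ to be matched with \eqref{M-rel}.
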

Any $(m+n)\times (m+n)$ matrix $M$  satisfying the equation \eqref{relation manin matrix} is called a {\it q-super Manin matrix}. If $q=1$, then $M$
 reduces to the super Manin matrix studied in \cite{MR}. When $n=0$ and $q=1$, the formulation of Manin matrices was given by \cite{Si}.
\begin{prop}
There exist algebra morphisms
  \beq
  \delta: S^q(V)\rightarrow\Mc_{m|n} \ot S^q(V), \ \ \ \ \delta^*: \Lambda^q(V^*)\rightarrow \Lambda^q(V^*) \ot \Mc_{m|n}
  \eeq
  such that
  \beq
  \delta(X)= M\ot X\ \ \text{i.e.} \ \ \delta(x_i)=\sum_{j=1}^{m+n} M_{ij}\ot x_j;
  \eeq

  \beq
  \delta^*(\Psi)=\Psi\ot M \ \  \text{i.e.} \ \ \delta^*(\psi_i)=\sum_{j=1}^{m+n}\psi_j\ot M_{ji}.
  \eeq
\end{prop}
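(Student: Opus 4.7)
The plan is to apply the universal property of the quadratic superalgebras $S^q(V)$ and $\Lambda^q(V^*)$: since each is presented by generators modulo explicit quadratic relations, a parity-preserving assignment on generators extends to an algebra morphism as long as the images satisfy the defining relations.

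For $\delta$, I would first verify that the assignment $x_i\mapsto\sum_j M_{ij}\otimes x_j$ respects the $\ZZ_2$-grading: the parity of $M_{ij}\otimes x_j$ is $(\bar i+\bar j)+\bar j=\bar i$, matching $\bar x_i$. It then remains to check that these images satisfy the relation $(1-P^q)(X\otimes X)=0$ in the graded tensor product $\Mc_{m|n}\otimes S^q(V)$. With the standard super tensor product multiplication $(a\otimes b)(c\otimes d)=(-1)^{\bar b\bar c}(ac)\otimes(bd)$, the column vector $(\delta(x_1),\ldots,\delta(x_{m+n}))^t$ is exactly the $Y=MX$ of Proposition~\ref{Manin-equi-rela}, and the required identity $(1-P^q)(Y\otimes Y)=0$ is one of the equivalent reformulations of the $q$-super Manin matrix relation. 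Hence $\delta$ extends uniquely to an algebra morphism.

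The argument for $\delta^*$ is parallel. Parity is preserved since $\bar\psi_j+\bar M_{ji}=(1-\bar j)+(\bar j+\bar i)=1+\bar i=\bar\psi_i$. The row vector $\delta^*(\Psi)$ in $\Lambda^q(V^*)\otimes\Mc_{m|n}$ is the $\Phi=\Psi M$ of Proposition~\ref{Manin-equi-rela}, and the third equivalent form $(\Phi\otimes\Phi)(1+P^q)=0$ is precisely the statement that the images satisfy the defining relation of $\Lambda^q(V^*)$.

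The only genuine subtlety is sign bookkeeping from the super tensor product. When expanding $Y\otimes Y$ or $\Phi\otimes\Phi$ into components, moving $x_k$ past $M_{ij}$ (resp.\ $M_{ij}$ past $\psi_k$) produces parity signs $(-1)^{(\bar i+\bar j)\bar k}$ that must be consistent with the conventions built into Proposition~\ref{Manin-equi-rela}. Once those conventions are fixed as in that proposition, no further computation is required: the two equivalent relations translate verbatim into the vanishing of the images of the defining relations of $S^q(V)$ and $\Lambda^q(V^*)$, establishing the well-definedness of $\delta$ and $\delta^*$ as algebra morphisms.
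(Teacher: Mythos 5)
The paper states this proposition without proof, but your argument is exactly the intended one: the proposition is placed immediately after Proposition~\ref{Manin-equi-rela} precisely because the equivalent forms $(1-P^q)(Y\otimes Y)=0$ and $(\Phi\otimes\Phi)(1+P^q)=0$ of the defining relations, together with the universal property of the quadratic superalgebras and the parity check you carry out, yield the two morphisms directly. Your proposal is correct and matches the paper's (implicit) approach.
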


 The symmetric group $\mathfrak{S}_{k}$ acts on the space $(\mathbb{C}^{m|n})^{\otimes k}$ via
$s_{i}\mapsto P_{s_{i}}^{q} :=P_{i,i+1}^{q}$ for $i=1, \ldots, k-1$, where $s_{i}$ denotes
the adjacent transpositions $(i,i+1)$. If $\sigma=s_{i_{1}}\cdots s_{i_{l}}$ is a reduced decomposition of
an element $\sigma\in \mathfrak{S}_{k}$, we set $P_{\sigma}^{q}= P_{s_{i_{1}}}^{q}\cdots P_{s_{i_{l}}}^{q}$.
The $q$-antisymmetrizer and $q$-symmetrizer are then defined by
\beq
	\mathcal{A}_{k}^q =\frac{1}{k!} \sum_{\sigma\in \mathfrak{S}_{k}}\mathrm{sgn}\sigma\cdot P_{\sigma}^{q}\quad\text{ and }\quad
	\mathcal{H}_{k}^q =\frac{1}{k!} \sum_{\sigma\in \mathfrak{S}_{k}}P_{\sigma}^{q}\quad\text{ respectively.}
\eeq
Then any $(m+n)\times (m+n)$ matrix $M$  is a {\it $q$-super Manin matrix} if and only if it satisfies the following relation:
\beq
\mathcal{A}_{2}^qM_1M_2\mathcal{H}_{2}^q=0.
\eeq

 For an $(m+n)\times(m+n)$-matrix $A=(A_{ij})$ where $A_{ij}\in \Mc_{m|n}$, we define $A_i\in \Mc_{m|n} \ot \End(\CC^{m|n})^{\ot k}$ as follows:
\begin{equation}
A_i=\sum_{i,j}(-1)^{\bar{i}\bar{j}+\bar{j}} A_{ij}\ot 1^{\ot(i-1)}\ot E_{ij} \ot 1^{\ot(k-i)}.
\end{equation}
 Then we have the following identities in the superalgebra $\Mc_{m|n} \ot \End(\CC^{m|n})^{\ot k}$:

\begin{thm}\label{comm} For the generator matrix $M$ one has that
  \begin{align}
    \Ac_k^q M_1\ldots M_k \Ac_k^q=\Ac_k^q M_1\ldots M_k,\\
    \Hc_k^q M_1 \ldots M_k \Hc_k^q=M_1\ldots M_k \Hc_k^q.
  \end{align}
\end{thm}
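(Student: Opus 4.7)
The plan is to reduce both statements to the local $q$-super Manin relation on adjacent tensor positions $(i,i+1)$ and then propagate through $\mathfrak{S}_k$ by reduced decompositions. Write $A^{(i)}=\tfrac12(1-P^q_{i,i+1})$ and $H^{(i)}=\tfrac12(1+P^q_{i,i+1})$ for the local antisymmetrizer/symmetrizer. Because $P^q$ realises $\mathfrak{S}_k$ (via $(P^q)^2=1$ and the braid relation recorded after~\eqref{P}), a standard reindexing of the defining sum gives $\mathcal{A}_k^q P^q_{s_i}=-\mathcal{A}_k^q$ and $\mathcal{H}_k^q P^q_{s_i}=\mathcal{H}_k^q$, hence $\mathcal{A}_k^q A^{(i)}=\mathcal{A}_k^q$ and $\mathcal{H}_k^q H^{(i)}=\mathcal{H}_k^q$, together with their mirrors on the opposite side. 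Proposition~\ref{Manin-equi-rela} applied at positions $(i,i+1)$ supplies the local identity $A^{(i)} M_i M_{i+1} H^{(i)}=0$, equivalently
\begin{equation*}
A^{(i)}M_iM_{i+1}=A^{(i)}M_iM_{i+1}A^{(i)}, \qquad M_iM_{i+1}H^{(i)}=H^{(i)}M_iM_{i+1}H^{(i)}.
\end{equation*}

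For the first identity I would show that $\mathcal{A}_k^q M_1\cdots M_k=\mathcal{A}_k^q M_1\cdots M_k\,A^{(i)}$ for every $i$. Starting from $\mathcal{A}_k^q=\mathcal{A}_k^q A^{(i)}$, push $A^{(i)}$ rightward past $M_1,\ldots,M_{i-1}$ (they act in disjoint tensor slots), apply $A^{(i)}M_iM_{i+1}=A^{(i)}M_iM_{i+1}A^{(i)}$ to produce a fresh $A^{(i)}$ to the right of $M_{i+1}$, and then carry that $A^{(i)}$ past $M_{i+2},\ldots,M_k$ out to the extreme right. This yields $\mathcal{A}_k^q M_1\cdots M_k\,P^q_{s_i}=-\mathcal{A}_k^q M_1\cdots M_k$ for each $i$; iterating along any reduced decomposition $\sigma=s_{i_1}\cdots s_{i_l}$ gives $\mathcal{A}_k^q M_1\cdots M_k\,P^q_\sigma=\mathrm{sgn}(\sigma)\,\mathcal{A}_k^q M_1\cdots M_k$, and averaging over $\mathfrak{S}_k$ yields the first assertion.

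The second identity is handled by the symmetric argument. Starting from $M_1\cdots M_k\,\mathcal{H}_k^q=M_1\cdots M_k\,H^{(i)}\mathcal{H}_k^q$, migrate $H^{(i)}$ leftward past $M_{i+2},\ldots,M_k$, invoke $M_iM_{i+1}H^{(i)}=H^{(i)}M_iM_{i+1}H^{(i)}$, and then move the newly produced left factor $H^{(i)}$ out past $M_1,\ldots,M_{i-1}$ all the way to the far left. This gives $P^q_{s_i}M_1\cdots M_k\,\mathcal{H}_k^q=M_1\cdots M_k\,\mathcal{H}_k^q$ for every $i$, extends by reduced decomposition to $P^q_\sigma M_1\cdots M_k\,\mathcal{H}_k^q = M_1\cdots M_k\,\mathcal{H}_k^q$ for all $\sigma\in\mathfrak{S}_k$, and collapses to the desired equality upon averaging.

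The only genuinely delicate ingredient is the disjoint-position commutation of $A^{(i)},H^{(i)}$ with $M_j$ for $j\notin\{i,i+1\}$, which must hold on the nose rather than up to a Koszul sign. This is precisely the role of the twist $(-1)^{\bar a\bar b+\bar b}$ in the definition of $M_j$: each summand $M_{ab}\otimes 1^{\otimes(j-1)}\otimes E_{ab}\otimes 1^{\otimes(k-j)}$ has total parity $2(\bar a+\bar b)\equiv 0$, so $M_j$ is an even element of $\Mc_{m|n}\otimes\End(\CC^{m|n})^{\otimes k}$; and $A^{(i)},H^{(i)}$ are manifestly even and live in disjoint tensor slots, so they commute with such $M_j$ without any sign. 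This parity bookkeeping is the step I would verify most carefully, after which the manipulations above are routine.
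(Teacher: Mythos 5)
Your proof is correct and follows essentially the same route as the paper's: reduce each global identity to the local relation on adjacent slots coming from \eqref{relation manin matrix}, using $\mathcal{A}_k^q=\mathcal{A}_k^q(1-P^q_{a,a+1})/2$ (resp.\ its symmetrizer analogue), and then propagate through $\mathfrak{S}_k$. The paper compresses this into three lines; you have merely supplied the details it elides, in particular the reduced-decomposition iteration and the parity bookkeeping showing each $M_j$ is even and hence commutes on the nose with operators supported on other tensor slots.
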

\begin{proof}
To prove the first identity, it suffices to show that for any $a=1,
\ldots,k-1$, we have
\beq
 \Ac_k^q M_1\ldots M_k P^q_{a,a+1}=-\Ac_k^q M_1\ldots M_k.
\eeq
Since $\Ac_k^q=\Ac_k^q (1-P_{a,a+1}^q)/2$, it is enough to consider the case $k = 2$.
And the relation when $k=2$ can be deduced  from \eqref{relation manin matrix}.  Similarly, we can prove the second identity from \eqref{relation manin matrix}.
\end{proof}
For  a matrix $A=\sum _{i,j=1}^{m+n}a_{ij}E_{ij}$,   the supertranspose is defined by
$$A^{st}=\sum_{i,j}(-1)^{\bar{i}(\bar{i}+\bar{j})}a_{ji}E_{ij},$$
and the supertrace $str$ by
\[
str(A)=\sum_{i=1}^{m+n}(-1)^{\bar{i}}a_{ii}.
\]
For any  $a\in\{1,2,\ldots,k\}$ we will denote by $str_a$ the corresponding partial transposition on the
algebra $(\End \mathbb C^{m|n})^{\otimes k}$ which acts as $str$ on the $a$-th copy of $\End \mathbb C^{m|n}$ and as the identity map on all
the other tensor factors.

Employing  the approach of  \cite{MR}, we obtain the MacMahon Master Theorem for the superalgebra $\Mc_{m|n}$.
\begin{thm}
For any $k\geq 1$, we have the identities in the superalgebra $\Mc_{m|n}$
\begin{align}
  \sum_{r=0}^k(-1)^r str_{1,\ldots,k}\Hc_r^q\Ac_{\{r+1,\ldots,k\}}^qM_1\ldots M_k=0,\\
  \sum_{r=0}^k (-1)^r  str_{1,\ldots,k}\Ac_r^q\Hc_{\{r+1,\ldots,k\}}^qM_1\ldots M_k=0,
\end{align}
where $\Ac_{\{r+1,\ldots,k\}}^q$ and $\Hc_{\{r+1,\ldots,k\}}^q$ denote the $q$-antisymmetrizer and $q$-symmetrizer over the copies of  $\End(\CC^{m|n})$ labeled by $r+1,\ldots,k$.
\end{thm}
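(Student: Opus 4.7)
The plan is to adapt the proof framework of Molev and Ragoucy \cite{MR} to the $q$-super setting. The strategy is a telescoping argument that combines Theorem \ref{comm} with recursive relations among the $q$-symmetrizers and $q$-antisymmetrizers.

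First, I would record the relevant idempotent identities. Because $\mathcal{A}_r^q$ and $\mathcal{A}_{\{r+1,\ldots,k\}}^q$ act on disjoint tensor factors, they commute, and likewise for the $q$-symmetrizers. Moreover, a coset decomposition of $\mathfrak{S}_{r+1}$ over $\mathfrak{S}_r$ yields a one-step recursion of the form
$$\mathcal{A}_{r+1}^q=\tfrac{1}{r+1}\mathcal{A}_r^q\Bigl(1+\textstyle\sum_{a=1}^{r}(-1)^{r-a+1}P^q_{s_a}P^q_{s_{a+1}}\cdots P^q_{s_r}\Bigr),$$
together with the sign-reversed analogue for $\mathcal{H}_{r+1}^q$. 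These are the identities that will drive the telescope.

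Second, I would invoke Theorem \ref{comm} to symmetrize the expression. Applying the theorem to the tensor factors $r+1,\ldots,k$ allows $M_{r+1}\cdots M_k$ to be bracketed on both sides by $\mathcal{A}_{\{r+1,\ldots,k\}}^q$ (respectively $\mathcal{H}_{\{r+1,\ldots,k\}}^q$); a similar bracketing applies to the factors $1,\ldots,r$ with $\mathcal{H}_r^q$ or $\mathcal{A}_r^q$. In this symmetric form, each summand in the alternating sum is invariant under both block idempotents from the left and the right, a form amenable to trace manipulations.

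Third, I would carry out the telescoping. Using the step recursion to split off the transposition containing position $r+1$ from $\mathcal{A}_{r+1}^q$, together with the symmetric bracketing and the cyclic property of the partial supertrace $str_{1,\ldots,k}$, one can rewrite the $(r+1)$st summand so that it cancels the $r$th summand after the sign $(-1)^r$ is taken into account. The boundary terms $r=0$ and $r=k$ collapse because $\mathcal{A}_0^q=\mathcal{H}_0^q=1$, giving the desired vanishing. The second identity follows by exchanging the roles of $\mathcal{A}^q$ and $\mathcal{H}^q$ throughout.

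The main obstacle will be the bookkeeping of super signs and $q$-powers. Each partial supertrace contributes factors $(-1)^{\bar{i}}$, while each braid $P^q_{i,j}$ introduces signs $(-1)^{\bar{i}\bar{j}}$ and powers $q^{\varepsilon(i-j)}$ that must be tracked through the coset recursion and through the commutations provided by Theorem \ref{comm}. Verifying that these signs and $q$-factors combine coherently with the alternating cancellation, especially when a newly extracted $P^q_{a,r+1}$ is pushed across the $M_c$'s using Theorem \ref{comm}, is the delicate step; the algebraic skeleton of the argument, however, is unchanged from the $q=1$ super case treated in \cite{MR}.
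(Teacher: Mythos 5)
Your proposal follows essentially the same route as the paper: both adapt the Molev--Ragoucy telescoping argument, combining Theorem \ref{comm} with one-step recursions expressing $\Ac_{\{r,\ldots,k\}}^q$ (resp.\ $\Hc_{r+1}^q$) through $\Ac_{\{r+1,\ldots,k\}}^q$ (resp.\ $\Hc_r^q$) and a single $P_{r,r+1}^q$, then using the partial supertrace to kill the cross terms. One refinement for the write-up: the cancellation is not literally between the $r$th and $(r+1)$st summands; rather each summand $str_{1,\ldots,k}\Hc_r^q\Ac_{\{r+1,\ldots,k\}}^qM_1\cdots M_k$ splits as $\tfrac{r(k-r+1)}{k}\,c_r+\tfrac{(r+1)(k-r)}{k}\,c_{r+1}$ with $c_r=str_{1,\ldots,k}\Hc_r^q\Ac_{\{r,\ldots,k\}}^qM_1\cdots M_k$, and it is the alternating sum of these that telescopes, the boundary contributions vanishing because of the coefficients $r$ and $k-r$ rather than because $\Ac_0^q=\Hc_0^q=1$.
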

\begin{proof}
We first prove the first equation by Theorem \ref{comm} and the following facts:
\begin{equation}
\begin{aligned}
&(k-r+1)\Ac_{\{r,\ldots,k\}}^q=\Ac_{\{r+1,\ldots,k\}}^q-(k-r)\Ac_{\{r+1,\ldots,k\}}^q  P_{r,r+1}^{q}\Ac_{\{r+1,\ldots,k\}}^q,\\
&(r+1)\Hc_{r+1}^q=\Hc_r^q+r\Hc_r^q P_{r,r+1}^{q} \Hc_r^q.
\end{aligned}
\end{equation}
Hence we have that
\begin{equation}\label{trace replacement}
\begin{aligned}
&str_{1,\ldots,k}\Hc_r^q\Ac_{\{r+1,\ldots,k\}}^qM_1\ldots M_k\\
=&str_{1,\ldots,k}\frac{r(k-r+1)}{k}\Hc_r^q\Ac_{\{r,\ldots,k\}}^q
M_1\cdots M_k\\
+
&str_{1,\ldots,k}\frac{(r+1)(k-r)}{k}\Hc_{r+1}^q\Ac_{\{r+1,\ldots,k\}}^q
M_1\cdots M_k.
\end{aligned}
\end{equation}
Therefore the telescoping sum equals to zero. The second equation can be proved by the same arguments.

\end{proof}

Let $B, C$ be $(m+n)\times (m+n)$ matrices.
Denote $B*C=str_1 P^q_{12} B_1C_2$. Let $M^{[k]}$ be the $k$-th power of $M$ under the multiplication $*$, i.e.
\begin{align}\label{e:power-m}
M^{[0]}=1, M^{[1]}=M, M^{[k]}=M^{[k-1]}*M,k>1.
\end{align}

We denote $A(t)$, $S(t)$ and $T(t)$ as follows:
\begin{align}
S(t)&=\sum_{k=0}^{\infty}t^k str_{1,\ldots,k}\Hc_k^q M_1\ldots M_k,\\
A(t)&=\sum_{k=0}^{\infty}(-t)^k str_{1,\ldots,k}\Ac_k^q M_1\ldots M_k,\\
T(t)&=\sum_{k=0}^{\infty}t^k strM^{[k+1]}.
\end{align}

We now have Newton's identities for $q$-super Manin matrix.
\begin{thm}[Newton's identities]
Let $M$ be an $(m+n)\times (m+n)$ $q$-super Manin matrix.  Then
\begin{align}
&\partial_t A(t)=-A(t) T(t),\\
&\partial_t S(t)= T(t)S(t).
 \end{align}
\end{thm}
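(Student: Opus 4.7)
The plan is to mirror, at the coefficient level, the telescoping strategy employed for the MacMahon Master Theorem above. Writing $a_k=\str_{1,\ldots,k}\Ac_k^q M_1\cdots M_k$, $s_k=\str_{1,\ldots,k}\Hc_k^q M_1\cdots M_k$, and $\tau_j=\str M^{[j]}$, a comparison of $t^{N-1}$-coefficients reduces the two differential identities to the algebraic statements
\[
N a_N \;=\; \sum_{j=1}^{N}(-1)^{j-1}\,a_{N-j}\,\tau_j \qquad\text{and}\qquad N s_N \;=\; \sum_{j=1}^{N} s_{N-j}\,\tau_j
\]
for every $N\geq 1$. I focus on the first; the second is obtained by the parallel computation with $\Hc^q$ in place of $\Ac^q$, the only difference being the disappearance of the alternating sign.

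To prove the first identity, I apply inside $\str_{1,\ldots,N}(\cdot)M_1\cdots M_N$ the $r=1,\ k=N$ case of the recursion already used in the MacMahon proof,
\[
N\Ac_{\{1,\ldots,N\}}^q \;=\; \Ac_{\{2,\ldots,N\}}^q \;-\; (N-1)\,\Ac_{\{2,\ldots,N\}}^q\, P_{12}^q\, \Ac_{\{2,\ldots,N\}}^q.
\]
The first summand decouples position $1$ from positions $2,\ldots,N$ and contributes $\str(M)\cdot a_{N-1}=\tau_1 a_{N-1}$, matching the $j=1$ term on the right. For the second summand I use the subset analogue of Theorem \ref{comm} to absorb the right copy of $\Ac_{\{2,\ldots,N\}}^q$ into $M_2\cdots M_N$, commute $M_1$ past $\Ac_{\{2,\ldots,N\}}^q$ (which acts on disjoint positions), and then carry out the partial trace over position $1$; this turns the pair $(M_1,M_2)$ into the Manin square $(M^{[2]})_2$ by the very definition of the $*$-product. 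Re-applying the same recursion to $\Ac_{\{2,\ldots,N\}}^q$, each successive peeling produces a new $P_{a,a+1}^q$ which, when partially traced against the adjacent $M$-pair, extends the running Manin power by one. After $j$ iterations the contribution telescopes into exactly $(-1)^{j-1}a_{N-j}\tau_j$; summing over $j=1,\ldots,N$ gives the claim and hence $\partial_t A(t)=-A(t)T(t)$.

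The second identity follows by the verbatim argument using the companion recursion
\[
N\Hc_N^q \;=\; \Hc_{N-1}^q \;+\; (N-1)\,\Hc_{N-1}^q\, P_{N-1,N}^q\, \Hc_{N-1}^q
\]
from the MacMahon proof; all signs being positive yields $N s_N=\sum_{j}s_{N-j}\tau_j$, equivalently $\partial_t S(t)=T(t)S(t)$. The main obstacle lies in justifying the peeling step cleanly: one must establish, as an auxiliary lemma, the iterated partial-trace identity
\[
\str_{1,\ldots,j-1}\bigl(P_{12}^q\, P_{23}^q\,\cdots\, P_{j-1,j}^q\, M_1 M_2\cdots M_j\bigr) = (M^{[j]})_j,
\]
which realizes $\str M^{[j]}$ as a chain trace over adjacent $P^q$'s. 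This is proved by its own induction on $j$, with each step requiring careful tracking of the $q$-twisted super signs contributed when a $P^q$ is commuted across an $M_a$. Once this identity is in hand, collecting the telescoped signs in both Newton identities is a routine bookkeeping.
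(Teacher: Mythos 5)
Your treatment of the first identity is essentially the paper's own argument. The paper likewise peels the antisymmetrizer one position at a time via $k\Ac_{k}^q=\Ac_{k-1}^q-(k-1)\Ac_{k-1}^q P_{k-1,k}^q\Ac_{k-1}^q$ (it peels the last position rather than the first, which is immaterial), converts each freed $P^q$ under a partial supertrace into one more $*$-factor, and lands on
$k\tss str_{1,\ldots,k}\Ac_k^qM_1\cdots M_k=\sum_{i=0}^{k-1}(-1)^{k+i+1}(str_{1,\ldots,i}\Ac_i^qM_1\cdots M_i)\cdot str\tss M^{[k-i]}$,
which is exactly your $Na_N=\sum_{j=1}^{N}(-1)^{j-1}a_{N-j}\tau_j$. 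The iterated partial-trace lemma you flag as the main obstacle is precisely the step the paper compresses into ``$\ldots$'', so on this half you are neither ahead of nor behind the paper.

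The genuine gap is in the second identity. You propose to rerun the computation verbatim with $\Hc^q$ and assert $Ns_N=\sum_{j=1}^{N}s_{N-j}\tau_j$. With $\tau_j$ multiplying from the \emph{right}, that is the coefficient form of $\partial_t S(t)=S(t)T(t)$, not of the claimed $\partial_t S(t)=T(t)S(t)$. In $\Mc_{m|n}$ the coefficients of $S(t)$ and $T(t)$ do not obviously commute, so ``all signs being positive'' is not the only difference between the two cases: you would still need to show either that the $\Hc^q$-peeling can be arranged to deposit the $M^{[j]}$-factors on the left, or that $S(t)$ and $T(t)$ commute --- neither of which is addressed, and the second is not known a priori. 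The paper sidesteps this entirely: it deduces the second identity from the first by combining the MacMahon Master Theorem ($A(t)S(t)=S(t)A(t)=1$) with the Leibniz rule, so that $A(t)\partial_tS(t)=-(\partial_tA(t))S(t)=A(t)T(t)S(t)$, and left multiplication by $S(t)$ gives $\partial_tS(t)=T(t)S(t)$ with the factors automatically in the asserted order. You should either adopt that shortcut or redo the $\Hc^q$-computation with the operator ordering tracked explicitly.
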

\begin{proof}
By the fact
\beq
k\Ac_{k}^q=\Ac_{k-1}^q-(k-1)\Ac_{k-1}^q  P_{k-1,k}^q \Ac_{k-1}^q,
\eeq
we have that
\beq
\bal
&kstr_{1,\ldots,k-1}\Ac_k^q M_1\ldots M_k\\
&= str_{1,\ldots,k-1}\Ac_{k-1}^q M_1\ldots M_{k-1}M_k-(k-1)str_{1,\ldots,k-1}\Ac_{k-1}^q  P_{k-1,k}^q \Ac_{k-1}^q M_1\ldots M_k\\
&=(str_{1,\ldots,k-1}\Ac_{k-1}^q M_1\ldots M_{k-1})M-(k-1)str_{1,\ldots,k-2}(\Ac_{k-1}^q  M_1\ldots M_{k-1})*M\\
&\ldots\\
&=\sum_{i=0}^{k-1}(-1)^{k+i+1} (str_{1,\ldots,i}\Ac_i^q M_1\ldots M_i) M^{[k-i]}.
\eal
\eeq
Taking the supertrace on both sides of the above equation, we have that
\begin{equation}\label{eq cayleyhamiton}
kstr_{1,\ldots,k}\Ac_k^q M_1\ldots M_k=\sum_{i=0}^{k-1}(-1)^{k+i+1} (str_{1,\ldots,i}\Ac_i^q M_1\ldots M_i)\cdot str M^{[k-i]}.
\end{equation}
This implies that $\partial_t A(t)=-A(t) T(t)$.

It follows from the MacMahon Master Theorem that
$A(t)S(t)=S(t)A(t)=1$.
By the Leibniz rule, we have
\begin{align}
(\partial_t A(t))  S(t) +A(t)\partial_t S(t)=0.
\end{align}

Therefore, $A(t)\partial_t S(t)=A(t) T(t)S(t)$. Multiplying the both sides of the equation by $S(t)$ from the left, we have
\begin{equation}
\partial_t S(t)= T(t)S(t).
\end{equation}
\end{proof}


Let $I=(i_1,\ldots,i_k)$, $J=(j_1,\ldots,j_k)$, and $\alpha_i(I)$ denote the multiplicity of index $i$ in the given multiset $I$ of $[m+n]$. Given any permutation $\si \in \Sym_k$,
\beq\label{P-act1}
P_{\si}^q(E_{i_1j_1}\ot \ldots \ot E_{i_kj_k} )= \varepsilon(\si, I,J)E_{i_{\si^{-1}(1)}j_1}\ot \ldots \ot E_{i_{\si^{-1}(k)}j_k},
\eeq
where the coefficient $\varepsilon(\si, I,J)$ is denoted by
$$\varepsilon(\si, I,J)=\prod_{k<t, \atop \si(k)> \si(t)}q^{\ve(i_t-i_k)}(-1)^{\bar i_k \bar i_t+\bar j_k(\bar i_k +\bar i_t)}.$$ Similarly,
\beq\label{P-act2}
(E_{i_1j_1}\ot \ldots \ot E_{i_kj_k} )P_{\si^{-1}}^q= \omega(\si, I,J)E_{i_1 j_{\si^{-1}(1)}}\ot \ldots \ot E_{i_k j_{\si^{-1}(k)}},
\eeq
where the coefficient $\omega(\si, I, J)$ is denoted by
$$\omega(\si, I, J)=\prod_{k<t, \atop \si(k)> \si(t)}q^{\ve(j_k-j_t)}(-1)^{\bar j_k \bar j_t+\bar i_t(\bar j_k +\bar j_t)}.$$
We also set
\beq\label{gamma}
\gamma(I,J)=\sum_{a} \bar i_a\bar j_a+\sum_{a<b}  (\bar i_a+ \bar j_a)( \bar i_b +\bar j_b).
\eeq

\begin{prop}
We have the explicit formulas in superalgebra $\Mc_{m|n}$
\beq
\bal
str_{1,\ldots,k}\Ac_k^q M_1\ldots M_k=& \sum_{I} \frac{1}{\alpha_{m+1}!\ldots \alpha_{m+n}!} \\
&\times \sum_{\si\in \Sym_k} \sgn \si \cdot \varepsilon(\si,\si I, I)M_{i_{\si(1)}i_1}\ldots M_{i_{\si(k)}i_k}(-1)^{\gamma(\si I,I)},
\eal
\eeq
where the sum is over multisets  $I=\{i_1 < \ldots < i_l < m+1 \leq i_{l+1} \leq \ldots \leq  i_k \}$ with $l=0,\ldots ,k$, where $\si I=\{i_{\si(1)},\ldots,i_{\si(k)}\}$. Moreover,
\beq
\bal
str_{1,\ldots,k} M_1\ldots M_k\Hc_k^q=& \sum_{I} \frac{1}{\alpha_{1}!\ldots \alpha_{m}!} \\
&\times \sum_{\si\in \Sym_k} \sgn \si \cdot \omega(\si, I,\si I)M_{i_1 i_{\si(1)}}\ldots M_{i_k i_{\si(k)}}(-1)^{\gamma( I,\si I)},
\eal
\eeq
where the sum is over multisets  $I=\{i_1 \leq \ldots \leq i_l \leq m <i_{l+1}<\ldots < i_k \}$ with $l=0,\ldots ,k$.
\end{prop}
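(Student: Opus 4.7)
The plan is to expand both sides in the standard tensor basis $E_{i_1 j_1}\ot\cdots\ot E_{i_k j_k}$ of $(\End\CC^{m|n})^{\ot k}$, apply the action formulas \eqref{P-act1}--\eqref{P-act2} to evaluate $\Ac_k^q$ and $\Hc_k^q$, take the partial supertrace, and reorganize the resulting sum by the underlying multiset of indices. The two identities are mirror images of each other, so I focus on the first; the second follows from the analogous computation using \eqref{P-act2}, with the roles of even and odd indices interchanged (since $\Hc_k^q$ vanishes on repeated \emph{odd} indices rather than even ones).

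First I would write
\[
M_1\cdots M_k=\sum_{I',J}(\text{sign})\cdot M_{i'_1 j_1}\cdots M_{i'_k j_k}\ot E_{i'_1 j_1}\ot\cdots\ot E_{i'_k j_k},
\]
where the sign combines the normalization $(-1)^{\bar i'_a\bar j_a+\bar j_a}$ built into each $M_a$ with the Koszul signs $(-1)^{(\bar i'_a+\bar j_a)(\bar i'_b+\bar j_b)}$ for $a<b$ that arise when moving the $M_{i'_b j_b}$ past the $E_{i'_a j_a}$ in the superalgebra $\Mc_{m|n}\ot(\End\CC^{m|n})^{\ot k}$. Applying $\Ac_k^q=\frac{1}{k!}\sum_\sigma\sgn\sigma\cdot P_\sigma^q$ on the left and using \eqref{P-act1} permutes the row indices from $i'_a$ to $i'_{\sigma^{-1}(a)}$ and introduces the factor $\varepsilon(\sigma,I',J)$.

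Next, $str_{1,\ldots,k}$ collapses each $E_{i'_{\sigma^{-1}(a)} j_a}$ to $\delta_{i'_{\sigma^{-1}(a)},j_a}(-1)^{\bar j_a}$. After setting $j_a=i'_{\sigma^{-1}(a)}$ and relabeling $i_a:=j_a$ (so that $I'=\sigma I$, $J=I$, and $i'_a=i_{\sigma(a)}$), the monomial becomes $M_{i_{\sigma(1)}i_1}\cdots M_{i_{\sigma(k)}i_k}$ as required. The four independent sign contributions -- the $M_a$-normalization, the Koszul reordering factor, $\varepsilon(\sigma,\sigma I,I)$ from $P_\sigma^q$, and the supertrace factor $(-1)^{\bar j_a}$ -- should telescope to $\sgn\sigma\cdot\varepsilon(\sigma,\sigma I,I)(-1)^{\gamma(\sigma I,I)}$: the $(-1)^{\bar j_a}$ cancels the $(-1)^{\bar j_a}$ in the normalization, the remaining $(-1)^{\bar i'_a\bar j_a}$ assembles into the diagonal piece $\sum_a\bar i_{\sigma(a)}\bar i_a$ of $\gamma$ in \eqref{gamma}, and the pairwise Koszul factors assemble into the off-diagonal piece $\sum_{a<b}(\bar i_{\sigma(a)}+\bar i_a)(\bar i_{\sigma(b)}+\bar i_b)$.

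Finally, I would reorganize the sum over ordered tuples $(i_1,\ldots,i_k)\in[m+n]^k$ by the underlying multiset. By Theorem~\ref{comm} we have $\Ac_k^q M_1\cdots M_k=\Ac_k^q M_1\cdots M_k\,\Ac_k^q$, so any tuple whose multiset repeats an even index is killed by the trailing $\Ac_k^q$; the surviving multisets are exactly those of the form $i_1<\cdots<i_l<m+1\leq i_{l+1}\leq\cdots\leq i_k$, and the $k!/(\alpha_{m+1}!\cdots\alpha_{m+n}!)$ distinct orderings of such a multiset contribute identical terms, which combined with the $\frac{1}{k!}$ from $\Ac_k^q$ produces the prefactor $1/(\alpha_{m+1}!\cdots\alpha_{m+n}!)$. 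The principal obstacle is the sign bookkeeping of the third step, where four distinct sources of signs must conspire to produce the single exponent $\gamma(\sigma I,I)$; I would first verify the identity at $k=2$ and $k=3$ to pin down the conventions before committing to the general argument.
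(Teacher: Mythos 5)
Your proposal is correct and takes essentially the same route as the paper: both expand $\Ac_k^q M_1\cdots M_k$ in the basis $E_{i_1j_1}\ot\cdots\ot E_{i_kj_k}$, use \eqref{P-act1} to make the coefficients explicit, invoke Theorem~\ref{comm} to kill the terms with a repeated even index and to identify the $k!/(\alpha_{m+1}!\cdots\alpha_{m+n}!)$ orderings of a surviving multiset against the $1/k!$ in $\Ac_k^q$, and then take the supertrace. The only (cosmetic) difference is that the paper first records the symmetry relations of the abstract coefficients $M^{i_1\ldots i_k}_{j_1\ldots j_k}$ and writes their explicit form last, whereas you compute explicitly from the outset; the sign bookkeeping you defer to a $k=2,3$ check is likewise left implicit in the paper.
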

\begin{proof}
  Write
  \ben
  \Ac_k^qM_1\ldots M_k= \sum_{I,J}M_{j_1\ldots j_k}^{i_1 \ldots i_k}\ot E_{i_1j_1}\ot \cdots \ot E_{i_k j_k},
  \een
  summed over multisets $I=\{i_1,\ldots,i_k\}$ and $J=\{j_1,\ldots,j_k\}$, where $ M_{j_1\ldots j_k}^{i_1 \ldots i_k}\in \Mc_{m|n}$. By the theorem \ref{comm}, we have that
  \beq
     P_{a,a+1}^q\Ac_k^qM_1\ldots M_k=\Ac_k^qM_1\ldots M_k P_{a,a+1}^q=-\Ac_k^qM_1\ldots M_k.
  \eeq
Therefore,
\beq
  M_{j_1\ldots j_k}^{i_1 \ldots i_{a+1} i_a \ldots i_k}=-q^{\ve(i_{a+1}-i_a)}M_{j_1\ldots j_k}^{i_1 \ldots i_k}(-1)^{\bar i_a \bar i_{a+1} +\bar i_a \bar j_a + \bar i_{a+1}\bar j_a}
\eeq
and
\beq
  M^{i_1\ldots i_k}_{j_1 \ldots j_{a+1} j_a \ldots j_k}=-q^{\ve(j_a-j_{a+1})}M^{i_1\ldots i_k}_{j_1 \ldots j_k}(-1)^{\bar j_a \bar j_{a+1} +\bar j_a \bar i_{a+1} + \bar j_{a+1}\bar i_{a+1}}
\eeq
Either of these relations implies that
if $i_a=i_{a+1}$,
\beq
  M_{i_1\ldots i_k}^{i_1 \ldots i_k}=-M_{i_1\ldots i_k}^{i_1 \ldots i_k}(-1)^{\bar i_a}
\eeq
so that the coefficient vanishes  if $\bar i_a=0$.
Moreover,
\beq
  M_{i_1\ldots i_{a+1} i_a \ldots i_k}^{i_1\ldots i_{a+1} i_a \ldots i_k}=M_{i_1\ldots i_k}^{i_1 \ldots i_k}.
\eeq
 By the definition of the supertrace, we have
 \beq
 str_{1,\ldots,k}\Ac_k^q M_1\ldots M_k=\sum_I \frac{k!}{\alpha_{m+1}!\ldots \alpha_{m+n}!}M_{i_1\ldots i_k}^{i_1 \ldots i_k}(-1)^{\bar i_1+\cdots +\bar i_k},
 \eeq
 where the sum is over multisets  $I=\{i_1 < \ldots < i_l < m+1 \leq i_{l+1} \leq \ldots \leq  i_k \}$ with $l=0,\ldots ,k$.

On the other hand, using the notations \eqref{P-act1} and \eqref{gamma} we can write
\beq
M_{j_1\ldots j_k}^{i_1 \ldots i_k}=\frac{1}{k!}\sum_{\si \in \Sym_k}\sgn \si  \cdot \varepsilon(\si,\si I, J) M_{i_{\si(1)}j_1}\ldots M_{i_{\si(k)}j_k}(-1)^{\gamma(\si I,J)}(-1)^{\bar j_1+\cdots +\bar j_k}.
\eeq
which completes the proof of the first equation. By the same argument we can prove the second.
\end{proof}

\section{Quantum Berezinian}
\setcounter{equation}{0}
We employ Manin's approach \cite{Ma3} to define the quantum Berezinian of the $q$-super Manin matrix.
Denote $\Kc_{m|n}=\Lambda^q(V^*)\ot S^{q}(V)$. Then $\Kc_{m|n}$ is a superalgebra with $2(m+n)$ generators $\psi_1\ot 1, \ldots, \psi_{m+n}\ot 1$, $1 \ot x_1 \ldots, 1 \ot x_{m+n}$. Let
\ben
c=\sum_{i=1}^{m+n}\psi_i\ot x_i \in \Kc_{m|n}.
\een
It is easy to verify that $c^2=0$. And define the linear map $d$ by
\beq
\bal
d: \Kc_{m|n}&\rightarrow \Kc_{m|n}\\
k &\mapsto k\cdot c
\eal
\eeq
Hence considering the cohomology  with respect to $d$: $$H(\Kc_{m|n})=\mathrm{ker}(d)/\mathrm{Im}(d).$$
Then we have the following proposition.

\begin{prop} \cite{Ma3}
$H(\Kc_{m|n})$ is an one-dimensional vector space spanned by
$$
\prod_{\bar i=0}\psi_i\prod_{\bar j =1}x_j \mod \mathrm{Im}(d).
$$
\end{prop}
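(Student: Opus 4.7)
My plan is to first verify $d^2 = 0$ so the cohomology is well defined, then construct an explicit $q$-super contracting homotopy whose failure is concentrated at a single bidegree, from which the one-dimensional cohomology will follow.

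To see $d^2 = 0$, I would split $c = c_0 + c_1$ where $c_0 = \sum_{i \leq m}\psi_i \otimes x_i$ collects the ``classical'' summands (truly exterior $\psi$ paired with truly symmetric $x$) and $c_1 = \sum_{i > m}\psi_i \otimes x_i$ collects the swapped ones. Pairing the $(i,j)$- and $(j,i)$-terms in each of $c_0^2$, $c_1^2$, and $c_0 c_1 + c_1 c_0$, the quadratic relations \eqref{exterior-rela} and \eqref{symmetric-rela}---whose $q$-exponents and super signs all descend from the same operator $P^q$ in \eqref{P}---combine with the super-tensor rule $(-1)^{\bar x_i \bar \psi_j}$ to produce the required cancellations. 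Hence $c^2 = 0$ and $d^2 = 0$, and writing $d_\alpha(a) = a\,c_\alpha$ one moreover has $d_0^2 = d_1^2 = 0$ and $d_0 d_1 + d_1 d_0 = 0$, so $(d_0, d_1)$ equips $\Kc_{m|n}$ with the structure of an anti-commuting pair of sub-differentials.

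Next I would introduce a contracting homotopy $h = h_0 + h_1$ of bidegree $(-1, -1)$, with each $h_\alpha$ built from the $q$-super derivations dual to the generators appearing in $c_\alpha$; concretely $h_0 = \sum_{i \leq m} \iota_{\psi_i}\otimes \partial^q_{x_i}$, where $\iota_{\psi_i}$ is the $q$-super interior product on $\Lambda^q(V^*)$ and $\partial^q_{x_i}$ is the $q$-super derivation $x_j \mapsto \delta_{ij}$ on $S^q(V)$, and analogously for $h_1$ with the roles of $\psi$ and $x$ interchanged. A Koszul-style Leibniz computation, performed separately in each sector, gives $d_\alpha h_\alpha + h_\alpha d_\alpha = \lambda_\alpha \cdot \mathrm{id}$ on each bidegree, where $\lambda_\alpha$ is a scalar that measures the ``room left'' in the $\bar i = \alpha$ sector and vanishes precisely when the corresponding truly-exterior weight is maximal. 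With the right choice of normalization the mixed terms $d_0 h_1 + h_1 d_0$ and $d_1 h_0 + h_0 d_1$ also vanish, so $dh + hd = (\lambda_0 + \lambda_1)\mathrm{id}$, which is nonzero on every bidegree except the one housing $\omega = \prod_{\bar i = 0}\psi_i \prod_{\bar j = 1} x_j$.

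Finally, at that distinguished bidegree I would directly verify that $\omega$ is a cocycle (for $i \leq m$ the product $\omega\cdot\psi_i$ vanishes because $\psi_i^2 = 0$; for $i > m$ the product $\omega\cdot x_i$ vanishes because $x_i^2 = 0$; hence $d\omega = \omega\cdot c = 0$) and, by inspection of the PBW basis at this bidegree, that $\omega$ is not a coboundary. The main obstacle is verifying the vanishing of the mixed terms $d_\alpha h_\beta + h_\beta d_\alpha$ for $\alpha \neq \beta$: this requires meticulous bookkeeping of the $q$-twists and super signs arising from the cross-commutation relations in \eqref{exterior-rela}--\eqref{symmetric-rela}, but the saving grace is that all these factors descend from the single operator $P^q$, which makes the cancellations systematic rather than ad hoc.
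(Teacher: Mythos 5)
The paper offers no proof of this proposition at all --- it is quoted directly from Manin \cite{Ma3} --- so your argument can only be judged on its own merits. What you propose is the standard Koszul contracting-homotopy proof, which is essentially Manin's route, and its overall architecture is sound: the rank-one analysis is correct (in the $\bar i=0$ sector the surviving class of $\CC[x_i]\oplus\psi_i\CC[x_i]$ is $\psi_i$, and dually $x_j$ in the $\bar j=1$ sector), the cocycle check for $\omega$ via $\psi_i^2=0$ and $x_j^2=0$ is right, and the non-coboundary check is immediate because the multidegree component containing $\omega$ is one-dimensional while any preimage under $d$ would have to contribute a spurious $x_i$ ($i\le m$) or $\psi_j$ ($j>m$).

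That said, the entire mathematical content is concentrated in the homotopy identity, which you assert rather than establish, and two specific points need repair. First, for $h_0=\sum_{i\le m}\iota_{\psi_i}\ot\partial^q_{x_i}$ to exist at all, $\iota_{\psi_i}$ and $\partial^q_{x_i}$ must be twisted derivations preserving the defining ideals $R'$ and $R$; this is exactly where the dual pairing $\langle\ |\ \rangle$ set up after \eqref{exterior-rela} enters, and once it does, the eigenvalue of $dh+hd$ on the multidegree $(\epsilon_i,k_i;l_j,\delta_j)$ is a sum of $q$-integers of the form $[k_i+1-\epsilon_i]$ and $[l_j+1-\delta_j]$ rather than ordinary integers. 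The statement therefore requires $q$ generic (not a root of unity); neither you nor the paper records this hypothesis, but your proof genuinely uses it. Second, your description of the vanishing locus of $\lambda_\alpha$ --- ``precisely when the truly-exterior weight is maximal'' --- is incomplete: the eigenvalue also requires the polynomial weight in that sector to be zero (e.g.\ on $\psi_1\cdots\psi_m\ot x_1$ the exterior weight is maximal but $dh+hd$ acts by $[1]\ne 0$). The correct characterization is maximal exterior weight \emph{and} zero polynomial weight in each sector, which is what singles out the multidegree of $\omega$. A cleaner alternative to fighting the mixed terms $d_\alpha h_\beta+h_\beta d_\alpha$ directly is to filter $\Kc_{m|n}$ by multidegree so that the associated graded complex splits as a braided tensor product of the $m+n$ rank-one complexes, and then invoke a K\"unneth/spectral-sequence collapse; this makes the ``systematic cancellation'' you appeal to into an actual argument.
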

This space is the superanalog of the quantum highest exterior power. Now we define a Hopf superalgebra which coacts upon the superspace.

Define $\overline{\Mc}_{m|n}$ is the free associative algebra generated by the entries of $\bar{M}_0, \bar{M}_1, \ldots $ where  these matrices have independent entries.
Define a structure of coalgebra on $\overline{\Mc}_{m|n}$ by
$$\bar{\Delta}(\bar{M}_k^{st^{k}})=\bar{M}_k^{st^{k}}\ot \bar{M}_k^{st^{k}}, \ \varepsilon(\bar{M}_k)=I, \ k=0,1,2,\ldots $$
And define a linear map $\imath:\overline{\Mc}_{m|n} \rightarrow \overline{\Mc}_{m|n} $ by
\begin{align}
   \imath(\bar{M}_k)=\bar{M}_{k+1}, \ \  \imath(uv)=(-1)^{\bar u \bar v}\imath(v)\imath(u).
\end{align}
Denote by $R^q=\langle \tau_{23}(R'\ot R)\rangle$ (resp. $R^{q^{-1}}$) the ideal in $\Mc_{m|n}^q$ (resp. $\Mc_{m|n}^{q^{-1}}$) generated by the left sides of relations \eqref{M-rel} (resp. replacing  $q$ with $q^{-1}$ in relations \eqref{M-rel}).

Define $\widetilde{\Mc}_{m|n}$ is the  quotient algebra of $\overline{\Mc}_{m|n}$ by the ideal $\bar{R}$ generated by
\begin{align}
    \text{the entries of } \bar{M}_{k}^{st^{k}}\bar{M}_{k+1}^{st^{k}}-I, \ \bar{M}_{k+1}^{st^{k}} \bar{M}_{k}^{st^{k}}-I, \ &k=0,1,2,\ldots;\\
    R_k=\imath^k(R^q), \ &k=0,1,2,\ldots ,
\end{align}
where $R_k$ is the  relation satisfied by $\bar{M}_k$ in the quotient algebra  $\widetilde{\Mc}_{m|n}$.
Then there exists an embedding $\gamma:\Mc_{m|n}^q \rightarrow \widetilde{\Mc}_{m|n} $ defined by
\beq
\gamma(M)=\bar{M}_0.
\eeq
The quotient algebra $\widetilde{\Mc}_{m|n}$ is a Hopf superalgebra induced from Hopf superalgebra $\overline{\Mc}_{m|n}$ with an antipode $\imath$.


We impose additional relations $\bar{M}_{k}=\bar{M}_{k+2d},\ d\in \mathbb{Z}^{+}$. Note that $\imath(R^q)=R^{q^{-1}}$.  For convenience, we will denote  $\bar{M}_{k}\ (k=2d)$ by $M$,  $\imath(M)=M^{-1}$.
Define the    right coacting of  $\widetilde{\Mc}_{m|n}$ on $\Kc_{m|n}$ by
\beq\label{Hopf-coaction}
\bal
\delta_{\Kc}: \Kc_{m|n} &\rightarrow\Kc_{m|n} \ot \widetilde{\Mc}_{m|n}\\
\delta_{\Kc}(\Psi) &= \Psi \ot  M,\\
\delta_{\Kc}(X^t)&= X^t \ot \imath(M^{st}),
\eal
\eeq
then we extend the coaction on superalgebra $\Kc_{m|n}$ by the algebra structure of $\Lambda^q(V^*)$ and opposite algebra structure of $S^q(V)$.
Then we have that
\beq
\bal
\delta_{\Kc}(c)&=\sum_{i=1}^{m+n}\delta_{\Kc}(\psi_i)\delta_{\Kc}(x_i)\\
&=\sum_{i=1}^{m+n} (\sum_{j=1}^{m+n} \psi_j \ot M_{ji})\cdot(\sum_{k=1}^{m+n}x_k \ot (M^{-1})^{st}_{ki})\\
&=\sum_{j,k=1}^{m+n} \sum_{i=1}^{m+n} (-1)^{(\bar i+\bar j)\bar k} \psi_j\ot x_k \ot  M_{ji}(M^{-1})^{st}_{ki} \\
&=\sum_{j,k=1}^{m+n} \sum_{i=1}^{m+n} (-1)^{\bar k \bar j + \bar k} \psi_j\ot x_k \ot  M_{ji}(M^{-1})_{ik} \\
&=\sum_{j=1}^{m+n} \psi_j\ot x_j\ot 1\\
&=c\ot 1 .
\eal
\eeq
Hence, $c$ is a coinvariant under $\delta_{\Kc}$. Then we can define the coaction of $\widetilde{\Mc}_{m|n}$ on $H(\Kc_{m|n})$ by
\beq
\bal
\delta_{H}: H(\Kc_{m|n})\rightarrow  H(\Kc_{m|n})\ot \widetilde{\Mc}_{m|n} .
\eal
\eeq
Then there exists some element $\Ber_q(M)\in \widetilde{\Mc}_{m|n}$ such that
\beq
\delta_{H}(\mathop{\prod}\limits^{\rightarrow}_{\bar i=0}\psi_i \mathop{\prod}\limits^{\leftarrow}_{\bar j =1}x_j )=\mathop{\prod}\limits^{\rightarrow}_{\bar i=0}\psi_i \mathop{\prod}\limits^{\rightarrow}_{\bar j =1}x_j  \ot \Ber_q(M),
\eeq
We call $\Ber_q(M)$ the {\it quantum Berezinian} of $M$. Explicitly,
\beq\label{Bere}
\bal
\Ber_q(M)=& \sum_{\si \in \Sym_m}(-q)^{-l(\si)}M_{\si(1),1}M_{\si(2),2}\cdots M_{\si(m),m}\\
&\times \sum_{\rho \in \Sym_n}(-q)^{-l(\rho)}(M^{-1})_{m+1,m+\rho(1)}\cdots (M^{-1})_{m+n,m+\rho(n)}.
\eal
\eeq


We can also define the {\it quantum Berezinian} $\Ber_{q^{-1}}(M^{-1})$ of the  $\qin$-super Manin matrix $M^{-1}$ by defining the right coaction of $\widetilde{\Mc}_{m|n}^{\text{cop}}$ on $\Kc_{m|n}'=\Lambda^{\qin}(V^*)\ot S^{\qin}(V)$ as
\beq
\bal
\delta_{\Kc'}(\Psi) &= \Psi \ot  M^{-1},\\
\delta_{\Kc'}(X^t)&= X^t \ot \imath(M^{-1})^{st},
\eal
\eeq
where superalgebra $\widetilde{\Mc}_{m|n}^{\text{cop}}$ is obtained from $\widetilde{\Mc}_{m|n}$ by taking the opposite of the coalgebra structure.
And we extend the coaction on superalgebra $\Kc'_{m|n}$ by the algebra structure of $\Lambda^{\qin}(V^*)$ and opposite algebra structure of $S^{\qin}(V)$.

Denote by $\{\psi'_i, x'_j \mid 1 \leq i ,j \leq m+n\}$ the generators of  $\Kc_{m|n}'$.
Then
\beq
\bal
\delta_{H'}( \mathop{\prod}\limits^{\leftarrow}_{\bar j=1}x'_j \mathop{\prod}\limits^{\rightarrow}_{\bar i=0}\psi'_i)= \mathop{\prod}\limits^{\rightarrow}_{\bar j =1}x'_j \mathop{\prod}\limits^{\rightarrow}_{\bar i=0}\psi'_i\ot \Ber_{\qin}(M^{-1}).
\eal
\eeq
Explicitly,
\beq\label{Bere-inverse}
\bal
\Ber_{q^{-1}}(M^{-1})=& \sum_{\rho \in \Sym_n}(-q)^{l(\rho)}M_{m+1,m+\rho(1)}\cdots M_{m+n,m+\rho(n)}\\
&\times \sum_{\si \in \Sym_m}(-q)^{l(\si)}(M^{-1})_{\si(1),1}(M^{-1})_{\si(2),2}\cdots (M^{-1})_{\si(m),m}.
\eal
\eeq
Write $M$ in block matrix,
\[
M=\left(\begin{array}{cc}
	M_{11}&M_{12}\\
	M_{21}&M_{22}\\
\end{array}\right)
\]
such that $M_{11}$ and $M_{22}$ are even submatrices  of size $m\times m$ and $n\times n$ respectively. Matrices $M_{12}$ and $M_{21}$ are odd submatrices  of size $m\times n$ and $n\times m$  respectively. We denote by $\pi(A)$ the matrix whose $(i,j)$-th entry is $A_{k+1-i,k+1-j}$ for any  $k \times k$ matrix $A$. Define the $\Pi$-transpose of $M$:
\[
\left(\begin{array}{cc}
	M_{11}&M_{12}\\
	M_{21}&M_{22}\\
\end{array}\right)^{\Pi}
=\left(\begin{array}{cc}
	\pi(M_{22})&\pi(M_{21})\\
	\pi(M_{12})&\pi(M_{11})\\
\end{array}\right).
\]
So $M^{\Pi\circ st}$ is a $q$-super Manin matrix with entries in $\mathcal{M}_{n|m}$ and the coaction of $M^{\Pi\circ st}$ on $\Kc_{n|m}$  is
\beq
\bal
\delta_{H}^{M^{\Pi}}(\Psi) &= \Psi \ot  M^{\Pi\circ st},\\
\delta_{H}^{M^{\Pi}}(X^t)&= X^t \ot \imath (M^{\Pi\circ st^{2}}).
\eal
\eeq
Then the quantum Berezian of $M^{\Pi\circ st}$ is
\beq
\bal
\Ber_q(M^{\Pi})^{st}=& \sum_{\rho \in \Sym_n}(-q)^{-l(\rho)}(M^{\Pi})_{1,\rho(1)}\cdots (M^{\Pi})_{n,\rho(n)} \\
&\times \sum_{\si \in \Sym_m}(-q)^{-l(\si)}((M^{-1})^{\Pi})_{n+\si(1),n+1}\cdots ((M^{-1})^{\Pi})_{n+\si(m),n+m} \\
=&\sum_{\rho \in \Sym_n}(-q)^{-l(\rho)}M_{m+n,m+\rho(n)}\cdots M_{m+1,m+\rho(1)}\\
&\times \sum_{\si \in \Sym_m}(-q)^{-l(\si)}(M^{-1})_{\si(m),m}\cdots (M^{-1})_{\si(1),1}  .
\eal
\eeq
Combination with the relations  $R^q$ and $R^{\qin}$ in  $\widetilde{\Mc}_{m|n}$,
\beq\label{pi-tran-rela}
\bal
\Ber_q(M^{\Pi\circ st}) =\Ber_{q^{-1}}(M^{-1}).
\eal
\eeq

In the following, we consider the ordered multisets of $[m+n]$ which are called the multi-indices.
A multi-index is a tuple $I=(i_1,i_2,\ldots,i_r)$ of integers in $[m+n]$.
The symbol $\Inv$ is defined by
\begin{equation}
\Inv( I)=
\left\{ \begin{aligned}
&\prod_{s<t\atop i_s>i_t}(-q)^{-1} \ &if\ i's\ are\ distinct,\\
&\ 0 \ &if\ two\ i's\ coinside.
\end{aligned} \right.
\end{equation}
\begin{prop}\label{permutation}
For any multi-indices $I=(i_1,i_2,\ldots,i_m)$ and $J=(j_1,j_2,\ldots,j_n)$ of $[m]$ and $[n]$ respectively, one has that
\beq
\bal
\Inv(I)\Inv(J)\Ber_q(M)=&\sum_{\si \in\Sym_m} (-q)^{-l(\si)} M_{\si(1),i_1} M_{\si(2),i_2}\cdots M_{\si(m),i_m}\\
&\times \sum_{\rho \in \Sym_n}(-q)^{-l(\rho)}(M^{-1})_{m+j_1,m+\rho(1)}\cdots (M^{-1})_{m+j_n,m+\rho(n)}.
\eal
\eeq
\end{prop}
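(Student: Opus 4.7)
The plan is to compute the cohomology coaction $\delta_H$ of $\widetilde{\Mc}_{m|n}$ applied to the class of $\omega_{I,J}:=\psi_{i_1}\cdots\psi_{i_m}\cdot x_{m+j_1}\cdots x_{m+j_n}$ in two different ways, and compare.

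First, I use the super $q$-commutation relations in $\Lambda^q(V^*)$ and $S^q(V)$. Each generator $\psi_i$ with $i\in[m]$ has odd parity, satisfies $\psi_i^2=0$, and any two such generators obey $\psi_a\psi_b=-q^{\ve(b-a)}\psi_b\psi_a$. Sorting $\psi_{i_1}\cdots\psi_{i_m}$ into increasing order produces the factor $\Inv(I)$ when $I$ has distinct entries, and the product vanishes when $I$ has a repetition (matching $\Inv(I)=0$). Analogous reasoning in $S^q(V)$ yields $x_{m+j_1}\cdots x_{m+j_n}=\Inv(J)\,x_{m+1}\cdots x_{m+n}$. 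Therefore $\omega_{I,J}=\Inv(I)\Inv(J)\,\omega_0$ in $\Kc_{m|n}$, where $\omega_0:=\psi_1\cdots\psi_m\cdot x_{m+1}\cdots x_{m+n}$ represents the distinguished cohomology generator. Applying $\delta_\Kc$ and descending to $H(\Kc_{m|n})$, the definition of the quantum Berezinian gives
\[
\delta_H[\omega_{I,J}]=\Inv(I)\Inv(J)\,[\omega_0]\otimes \Ber_q(M).
\]

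Second, I expand $\delta_\Kc(\omega_{I,J})$ directly via $\delta_\Kc(\psi_{i_s})=\sum_k\psi_k\otimes M_{k,i_s}$ and $\delta_\Kc(x_{m+j_t})=\sum_l x_l\otimes\imath(M^{st})_{l,m+j_t}$, then extract the coefficient of $[\omega_0]$. Repeating the derivation of \eqref{Bere} with $I$ and $J$ in place of $(1,\ldots,m)$ and $(1,\ldots,n)$: only terms with $k_s=\sigma(s)\in[m]$ for $\sigma\in\Sym_m$ and $l_t=m+\rho(t)$ for $\rho\in\Sym_n$ yield representatives of $[\omega_0]$, while all other contributions correspond to coboundaries. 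Sorting $\psi_{\sigma(1)}\cdots\psi_{\sigma(m)}$ and $x_{m+\rho(1)}\cdots x_{m+\rho(n)}$ back to the standard order, and using $\imath(M^{st})_{m+\rho(t),m+j_t}=(M^{-1})_{m+j_t,m+\rho(t)}$ (the super-signs being trivial on the relevant parities), the coefficient of $[\omega_0]$ becomes exactly
\[
\Bigl(\sum_{\sigma\in\Sym_m}(-q)^{-l(\sigma)}M_{\sigma(1),i_1}\cdots M_{\sigma(m),i_m}\Bigr)\Bigl(\sum_{\rho\in\Sym_n}(-q)^{-l(\rho)}(M^{-1})_{m+j_1,m+\rho(1)}\cdots(M^{-1})_{m+j_n,m+\rho(n)}\Bigr).
\]

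Equating the two expressions for $\delta_H[\omega_{I,J}]$ yields the proposition. The principal obstacle lies in the second computation: one must justify rigorously that the contributions from terms with some $k_s>m$ or $l_t\le m$ genuinely produce coboundaries and hence vanish in cohomology, so that the coefficient of $[\omega_0]$ is exactly the product of sums displayed above. This parallels the corresponding step in establishing \eqref{Bere} itself and relies on the one-dimensionality of $H(\Kc_{m|n})$ in the relevant bidegree, together with careful bookkeeping of the super-signs from the graded tensor product multiplication and from the entries of $\imath(M^{st})$.
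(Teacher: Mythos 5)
Your proposal is correct and follows essentially the same route as the paper: apply $\delta_H$ to the reordered monomial $\psi_{i_1}\cdots\psi_{i_m}\, x_{m+j_\bullet}$ and compare the two evaluations, one via the $q$-commutation relations in $\Lambda^q(V^*)$ and $S^q(V)$ (yielding the factor $\Inv(I)\Inv(J)$) and one by direct expansion of the coaction (yielding the displayed double sum). The coboundary issue you flag at the end is likewise left implicit in the paper, which simply asserts the first evaluation as an instance of the defining computation of \eqref{Bere}.
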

\begin{proof}
  Note that
\beq
\delta_{H}(\mathop{\prod}\limits^{\rightarrow}_{1\leq s\leq m}\psi_{i_s} \mathop{\prod}\limits^{\leftarrow}_{1\leq t \leq n}x_{m+j_t} )=\mathop{\prod}\limits^{\rightarrow}_{\bar i=0}\psi_i \mathop{\prod}\limits^{\rightarrow}_{\bar j =1}x_j  \ot \Ber_q'(M).
\eeq
And by the relation \eqref{symmetric-rela} in  $ S^{q}(V)$ and relation \eqref{exterior-rela} in  $\Lambda^q(V^*)$, we have that
\beq
\mathop{\prod}\limits^{\rightarrow}_{1\leq s\leq m}\psi_{i_s} \mathop{\prod}\limits^{\leftarrow}_{1\leq t \leq n}x_{m+j_t}=\Inv(I)\Inv(J)\mathop{\prod}\limits^{\rightarrow}_{\bar i=0}\psi_i \mathop{\prod}\limits^{\leftarrow}_{\bar j =1}x_j .
\eeq
Hence we obtain the proposition.
\end{proof}

We denote by $I\oplus J=(i_1,\ldots,i_{m},j_1,\ldots,j_n)$ the juxtaposition 
of two multi-indices
$I=(i_1,\ldots,i_{m})$, $J=(j_1,\ldots,j_n)$.
Let $I=(i_1,\ldots,i_{r})$, $K=(k_1,\ldots,k_s)$ be multi-indices of $[m+n]$.
We say $I$ is contained in $K$ (still denoted as
$I\subset K$)
if there exist distinct $1', \ldots, r'\in\{1, \ldots, s\}$
such that $i_a=k_{a'}$ for $1\le a\le s$.
Denote by $K\setminus I$
the multi-index
$(k_1,\ldots,\hat{i_1},\ldots,\hat{i_r},\ldots,k_s)$ obtained from
 $(k_1,k_2,\ldots,k_s)$ by deleting $i_1,\ldots,i_r$.
 If $K=[m+n]$ then let $I^c=K \setminus I$ be increasing for any $I$.

Given an increasing multi-index $I_1$ of $[m]$ (resp. $I_2$ of $[m+n]\setminus [m]$ ) with cardinality $r$ (resp. s).  And denote $I=I_1\oplus I_2$ and
$M_{I}$  the matrix  whose row and column
indices belong to $I$.  Note that $M_I$ is a $(r+s)\times (r+s)$ $q$-super Manin matrix.

Let $\Kc_{r|s}$ be the subalgebra of $\Kc_{m|n}$ with generators $\psi_i, x_j$ for $i,j \in I$.
Considering the right coacting of Hopf superalgebra $\widetilde{\Mc}_{r|s}$ on $\Kc_{r|s}$ by
\beq
\bal
\delta_{\Kc_{r|s}}: \Kc_{r|s} &\rightarrow\Kc_{r|s} \ot \widetilde{\Mc}_{r|s}\\
\delta_{\Kc_{r|s}}(\Psi_{I_1}) &= \Psi_{I_1} \ot  M_{I},\\
\delta_{\Kc_{r|s}}(X^t_{I_2})&= X^t_{I_2} \ot \imath(M_{I}^{st}).
\eal
\eeq
Define the minor quantum Berezinian by
\beq
\delta_{H_{r|s}}(\mathop{\prod}\limits^{\rightarrow}_{i\in I_1}\psi_{i} \mathop{\prod}\limits^{\leftarrow}_{j\in I_2}x_{j}) =\mathop{\prod}\limits^{\rightarrow}_{i \in I_1}\psi_{i} \mathop{\prod}\limits^{\rightarrow}_{j\in I_2}x_{j} \ot \Ber_q(M_{I}).
\eeq

\section{Minor identities for Berezinians}
For an $N\times N$ matrix $A=(a_{ij})$ with possible non-commutative entries, denote by $A^{ij}$ the submatrix obtained from $A$ by deleting the $i$-th row and $j$-th column. Suppose that the matrix $A^{ij}$ is invertible. The quasideterminants $|A|_{ij}$ is defined by
$$|A|_{ij}=a_{ij}-r_i^j(A^{ij})^{-1}c_j^i,$$
where $r_i^j$ is the row matrix obtained from the $i$-th row of $A$ by deleting the element $a_{ij}$, $c_j^i$ is the column matrix obtained from the $j$-th column of $A$ by deleting the element $a_{ij}$. Denote by
\beq
|A|_{ij}=
\left|\begin{array}{ccccc}
    a_{11}&\cdots& a_{1j}&\cdots&a_{1N}\\
    &\cdots&  & \cdots& \\
    a_{i1}&\cdots&\framebox{$a_{ij}$}& \cdots &a_{iN}\\
    &\cdots&  &\cdots& \\
    a_{N1}&\cdots&a_{Nj}&\cdots & a_{NN}
\end{array}\right|.
\eeq
If there exists the inverse matrix $A^{-1}$ and its $ji$-th entry $(A^{-1})_{ji}$ is an invertible element. Then $|A|_{ij}=((A^{-1})_{ji})^{-1}$ (see \cite{GR2}). We denote by $A^{(k)}$ (resp. $A_{(k)}$) the principal submatrix of
$A$ of the form $(A_{ij})_{1\leq i,j\leq k}$ (resp. $(A_{ij})_{m+n-k+1\leq i,j\leq m+n}$ ).

\begin{thm}\label{quasi-decomp}
We have the following decomposition of
 $\Ber_q(M)$ in  $\widetilde{\Mc}_{m|n}$,
\beq
\bal
	\Ber_q(M)=\big|M^{(1)}\big|_{11}\cdots
	\big|M^{(m)}\big|_{mm} \big|M^{(m+1)}\big|^{-1}_{m+1,m+1}\cdots
	\big|M^{(m+n)}\big|^{-1}_{m+n,m+n}.
\eal
\eeq
\end{thm}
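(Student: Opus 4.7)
The plan is to proceed by induction on the size $m+n$, via the recursive identity
\begin{equation*}
\Ber_q(M) = \Ber_q(M^{(m+n-1)}) \cdot \big|M^{(m+n)}\big|_{m+n,m+n}^{\varepsilon_{m+n}},
\end{equation*}
where $\varepsilon_k = +1$ if $\bar k = 0$ and $\varepsilon_k = -1$ if $\bar k = 1$. The principal submatrix $M^{(m+n-1)}$ is itself a $q$-super Manin matrix (with generators for the smaller algebra $\Mc_{m|n-1}$ when $n\geq 1$, or $\Mc_{m-1|0}$ when $n=0$), since restricting the quadratic relations \eqref{M-rel} to indices at most $m+n-1$ recovers exactly the defining relations of the smaller algebra. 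Iterating the recursion from $k=m+n$ down to $k=1$ assembles the full product, while the base case $m+n=1$ reads $\Ber_q(M)=M_{11}^{\varepsilon_1}=|M^{(1)}|_{11}^{\varepsilon_1}$.

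To establish the recursion, I would employ the block Gauss-type factorization
\begin{equation*}
M = \begin{pmatrix} I & 0 \\ c\,(M^{(m+n-1)})^{-1} & 1 \end{pmatrix}\begin{pmatrix} M^{(m+n-1)} & 0 \\ 0 & \big|M^{(m+n)}\big|_{m+n,m+n} \end{pmatrix}\begin{pmatrix} I & (M^{(m+n-1)})^{-1}b \\ 0 & 1 \end{pmatrix}
\end{equation*}
inside the Hopf superalgebra $\widetilde{\Mc}_{m|n}$, where $b$ (resp.\ $c$) denotes the last column (resp.\ row) of $M$ with its diagonal entry removed. The Berezinian of the block-diagonal middle factor is computed directly from \eqref{Hopf-coaction}: the upper-left block contributes $\Ber_q(M^{(m+n-1)})$ by the inductive hypothesis, while the bottom-right corner $|M^{(m+n)}|_{m+n,m+n}$ contributes a factor of itself if $\overline{m+n}=0$ (transported through $\psi_{m+n}$) and its inverse if $\overline{m+n}=1$ (transported through $x_{m+n}$ via $\imath(M^{st})$). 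For the outer unitriangular factors $L$ and $U$, I would show that their coactions leave the top form $\omega=\psi_1\cdots\psi_m\cdot x_{m+1}\cdots x_{m+n}$ invariant modulo $\mathrm{Im}(d)$ in $H(\Kc_{m|n})$: the cross-terms produced by $\delta_{\Kc}$ either repeat a generator of odd parity (annihilated by $\psi_i^2=0$ or $x_j^2=0$) or are proportional to the distinguished cocycle $c=\sum_i\psi_i\otimes x_i$, and hence exact.

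The main obstacle will be verifying rigorously that the unitriangular factors act trivially in cohomology, since their entries involve $(M^{(m+n-1)})^{-1}$ and live in $\widetilde{\Mc}_{m|n}$ rather than in $\Mc_{m|n}$, forcing careful bookkeeping of super-signs under $\delta_{\Kc}$. An alternative, more computational route is to work directly from the explicit formula \eqref{Bere}: apply the standard quasideterminant factorization of the $q$-column determinant (well known for $q$-Manin matrices) to the first factor, producing $|M^{(1)}|_{11}\cdots|M^{(m)}|_{mm}$, and then use the identity $|A|_{ij}=((A^{-1})_{ji})^{-1}$ together with a hereditary (Sylvester-type) identity for quasideterminants to recast the second factor, a row quantum determinant of the lower-right $n\times n$ block of $M^{-1}$, as $|M^{(m+1)}|^{-1}_{m+1,m+1}\cdots|M^{(m+n)}|^{-1}_{m+n,m+n}$.
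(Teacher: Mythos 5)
Your ``alternative, more computational route'' is essentially the paper's actual proof. The authors start from the explicit formula \eqref{Bere}, factor the first (column-determinant) factor into $\big|M^{(1)}\big|_{11}\cdots\big|M^{(m)}\big|_{mm}$ by the known quasideterminant decomposition for $q$-Manin matrices, and then handle the second factor by transporting it through an isomorphism $\varrho_{m|n}$ onto a ``left quantum superalgebra'' $\Mc^{\circ}_{n|m}$ (defined by $\Hc^q_2M^{\circ}_1M^{\circ}_2\Ac^q_2=0$, with $M_{ij}\mapsto M^{\circ}_{m+n-i+1,m+n-j+1}$), where the same decomposition applies to the resulting determinant in the entries of $(M^{\circ})^{-1}$; pulling back and using $|A|_{ij}=((A^{-1})_{ji})^{-1}$ produces exactly the inverse quasideterminants $\big|M^{(m+k)}\big|^{-1}_{m+k,m+k}$. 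So the second half of your proposal would reproduce the paper's argument; the only device you are missing is this index-reversal trick, which is what puts the row determinant of the lower-right block of $M^{-1}$ into a form where the Gelfand--Retakh/Krob--Leclerc factorization is directly applicable (your appeal to a ``hereditary/Sylvester-type identity'' is the same content, less precisely located).

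Your primary route (induction via a block LDU factorization plus a cohomological invariance argument) has a genuine gap that you acknowledge but underestimate. In this paper $\Ber_q$ is \emph{not} a function on matrices over $\widetilde{\Mc}_{m|n}$: it is defined only for the generator matrix $M$ (and for $M^{-1}$, $M^{\Pi\circ st}$ and the minors $M_I$) through the coaction \eqref{Hopf-coaction} on the one-dimensional cohomology $H(\Kc_{m|n})$. The factors $L$, $D$, $U$ in your factorization are arbitrary matrices with entries in (a localization of) $\widetilde{\Mc}_{m|n}$; no coaction is attached to them, so ``the Berezinian of the block-diagonal middle factor'' is not defined, and no multiplicativity statement $\Ber_q(LDU)=\Ber_q(L)\Ber_q(D)\Ber_q(U)$ is available. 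Establishing such multiplicativity in the noncommutative setting requires commutation relations between the entries of the distinct factors and a proof that each factor separately coacts on $\Kc_{m|n}$ compatibly with the relations of $\Lambda^q(V^*)$ and $S^q(V)$ --- a substantive claim, not sign bookkeeping. Moreover $(M^{(m+n-1)})^{-1}$ does not exist in $\widetilde{\Mc}_{m|n}$ as constructed (only $\imath(M)=M^{-1}$ has been adjoined), so even writing down the factorization forces a further localization. Unless you are prepared to build all of that machinery, the quasideterminant route you sketch at the end is the one to follow.
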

\begin{proof}
  Note that in the  right quantum algebra $\Mc_m$
  \beq\label{quasi-decom1}
  \sum_{\si \in \Sym_m}(-q)^{-l(\si)}M_{\si(1),1}M_{\si(2),2}\cdots M_{\si(m),m}=\big|M^{(1)}\big|_{11}\cdots
	\big|M^{(m)}\big|_{mm}.
  \eeq
The left quantum superalgebra $\Mc_{n|m}^{\circ}$ is defined by
\beq
\mathcal{H}_{2}^{q}M^{\circ}_1M^{\circ}_2\mathcal{A}_{2}^{q}=0.
\eeq
Then there is a superalgebra morphism $\varrho_{m|n}$ from the right quantum superalgebra $\Mc_{m|n}^{q}$ to left quanntum superalgebra $\Mc_{n|m}^{\circ}$:
 \beq
 \bal
 \varrho_{m|n}:\Mc_{m|n}^{q}&\rightarrow \Mc_{n|m}^{\circ}\\
M_{ij}&\mapsto  M_{m+n-i+1,m+n-j+1}^{\circ}.
 \eal
 \eeq

It  follows that  $\varrho_{m|n}$ is an isomorphism by the defining relations of the right and left quantum superalgebras.
Since $(M^{-1})_{ij}=\big|M\big|_{ji}^{-1}$,
the image of the second factor on the right side of \eqref{Bere}  under the isomorphism $\varrho_{m|n}$ is the determinant
\beq
\sum_{\rho \in \Sym_n}(-q)^{-l(\rho)}(M^{\circ})^{-1}_{n,\rho(n)}\cdots (M^{\circ})^{-1}_{1,\rho(1)}.
\eeq
The corresponding quasideterminant decomposition is proved in the same way as
\eqref{quasi-decom1}, so that
\beq
\sum_{\rho \in \Sym_n}(-q)^{-l(\rho)}(M^{\circ})^{-1}_{n,\rho(n)}\cdots (M^{\circ})^{-1}_{1,\rho(1)}=\big|((M^{\circ})^{-1})^{(n)}\big|_{nn}\cdots \big|((M^{\circ})^{-1})^{(1)}\big|_{11}.
\eeq
Since the preimage of  $\big|((M^{\circ})^{-1})^{(k)}\big|_{kk}$ under $\varrho_{m|n}$ is  $\big|M^{(m+n-k+1)}\big|^{-1}_{m+n-k+1,m+n-k+1}$, the preimage of the determinant is
\beq
\big|M^{(m+1)}\big|^{-1}_{m+1,m+1}\cdots
	\big|M^{(m+n)}\big|^{-1}_{m+n,m+n}.
\eeq
\end{proof}

\begin{cor}\label{pi-tran-quasi-decom}
We have the following decomposition of
 $\Ber_q((M^{\Pi})^{st})$ in  $\widetilde{\Mc}_{m|n}$,
\beq
\bal
	\Ber_q(M^{\Pi\circ st})=\big|M_{(n)}\big|_{m+1,m+1}\cdots
   \big|M_{(1)}\big|_{m+n,m+n}
   \big|M_{(m+n)}\big|^{-1}_{11}\cdots
	\big|M_{(n+1)}\big|^{-1}_{mm}.
\eal
\eeq
\end{cor}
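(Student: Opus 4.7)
The plan is to apply Theorem~\ref{quasi-decomp} directly to the matrix $N:=M^{\Pi\circ st}$, which by construction is a $q$-super Manin matrix in $\Mc_{n|m}$, and then translate each quasideterminant of $N$ into a quasideterminant of a principal submatrix of $M$.

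First I would verify that the $\Pi$-transpose amounts to nothing more than the global reversal of row and column indices: unpacking the block definition shows $(M^\Pi)_{ij}=M_{m+n+1-i,\,m+n+1-j}$, so $M^\Pi=R\tss M\tss R$ where $R$ is the $(m+n)\times(m+n)$ reversal permutation. Since $R$ intertwines supertransposition in the $m|n$-grading with supertransposition in the $n|m$-grading (because the $n|m$-parity of $i$ equals the $m|n$-parity of $m+n+1-i$), one obtains $N=R\tss M^{st}\tss R$. Consequently the top-left principal submatrix $N^{(k)}$ is just the bottom-right principal submatrix $M^{st}_{(k)}$ with its rows and columns simultaneously reversed, and in particular $N_{kk}=M_{m+n-k+1,\,m+n-k+1}$.

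Next, applying Theorem~\ref{quasi-decomp} to $N$ yields
\[
\Ber_q(N)=\big|N^{(1)}\big|_{11}\cdots\big|N^{(n)}\big|_{nn}\,\big|N^{(n+1)}\big|^{-1}_{n+1,n+1}\cdots\big|N^{(n+m)}\big|^{-1}_{n+m,n+m}.
\]
The key identity to establish is $\big|N^{(k)}\big|_{kk}=\big|M_{(k)}\big|_{m+n-k+1,\,m+n-k+1}$, which I would derive from two general invariance properties of quasideterminants in $\widetilde{\Mc}_{m|n}$: (i) $|R\tss A\tss R|_{ii}=|A|_{R(i),R(i)}$ for a simultaneous row-column reversal $R$, a direct consequence of $(R\tss A\tss R)^{-1}=R\tss A^{-1}\tss R$ together with $|A|_{ii}=((A^{-1})_{ii})^{-1}$; and (ii) $|A^{st}|_{ii}=|A|_{ii}$ on diagonal positions, which follows from $A^{st}_{ii}=A_{ii}$ combined with $(A^{st})^{-1}=(A^{-1})^{st}$.

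Substituting this identity into the Theorem~\ref{quasi-decomp} decomposition and reindexing produces the formula of the corollary, since $\Ber_q(M^{\Pi\circ st})=\Ber_q(N)$ tautologically. The main obstacle is the careful sign bookkeeping required to establish (ii) in the super setting: every off-diagonal entry in the row vector, column vector, and $(k-1)\times(k-1)$ inverted submatrix of the quasideterminant formula carries its own supertranspose sign, and one must check that these cancel consistently under the super Leibniz rule via $(A^{st})^{-1}=(A^{-1})^{st}$. A subsidiary point is the order of the resulting factors, which matches the statement of the corollary after the same product-reversal trick (through the morphism $\varrho_{m|n}$) that is already used in the proof of Theorem~\ref{quasi-decomp} for the odd block.
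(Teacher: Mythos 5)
Your route is genuinely different from the paper's: the paper starts from the identity $\Ber_q(M^{\Pi\circ st})=\Ber_{q^{-1}}(M^{-1})$ of \eqref{pi-tran-rela}, expands $\Ber_{q^{-1}}(M^{-1})$ via \eqref{Bere-inverse} into quasiminors of the principal submatrices $(M^{-1})^{(k)}$, and only then converts these into quasiminors of $M$ by the Jacobi ratio theorem for quasideterminants of Gelfand--Retakh and Krob--Leclerc. You instead apply Theorem~\ref{quasi-decomp} directly to $N=M^{\Pi\circ st}$ and translate each $\big|N^{(k)}\big|_{kk}$ back to $M$. Before addressing the main problem, note that your parenthetical justification of $N=R\ts M^{st}R$ is false: the $n|m$-parity of $i$ is the \emph{opposite} of the $m|n$-parity of $m+n+1-i$, so $N$ and $R\ts M^{st}R$ differ by the sign $(-1)^{\bar i+\bar j}$ on the off-diagonal blocks. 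This discrepancy is a conjugation by the diagonal parity matrix and so does not affect the diagonal quasiminors, hence the key identity $\big|N^{(k)}\big|_{kk}=\big|M_{(k)}\big|_{m+n-k+1,\,m+n-k+1}$ survives, but the reason you give for it is wrong.

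The genuine gap is the order of the factors. Theorem~\ref{quasi-decomp} applied to $N$ gives $\big|N^{(1)}\big|_{11}\cdots\big|N^{(n)}\big|_{nn}\big|N^{(n+1)}\big|^{-1}_{n+1,n+1}\cdots\big|N^{(n+m)}\big|^{-1}_{n+m,n+m}$, which after your substitution reads $\big|M_{(1)}\big|_{m+n,m+n}\cdots\big|M_{(n)}\big|_{m+1,m+1}\big|M_{(n+1)}\big|^{-1}_{mm}\cdots\big|M_{(m+n)}\big|^{-1}_{11}$ --- the factors appear \emph{reversed within each of the two groups} relative to the statement of the corollary. You wave this away as a ``product-reversal trick through $\varrho_{m|n}$,'' but $\varrho_{m|n}$ is an algebra morphism, not an anti-morphism: in the proof of Theorem~\ref{quasi-decomp} it transports a known decomposition from the right to the left quantum superalgebra (where the determinant is written with indices in decreasing order and therefore decomposes in decreasing order); it does not reverse a product of noncommuting elements that has already been derived. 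The principal quasiminors of a $q$-super Manin matrix are not shown (in this paper or in the cited references) to commute with one another --- that commutativity is a theorem of Krob--Leclerc for genuine quantum matrices, not for right quantum (Manin-type) matrices --- so passing from your ordering to the stated one requires an argument you do not supply. As written, your proof establishes a variant of the corollary with the opposite internal ordering, and the missing step is precisely what the paper's use of the Jacobi ratio theorem accomplishes, since that theorem produces the factors directly in the order asserted.
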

\begin{proof}
  By relation  \eqref{pi-tran-rela} of $\Pi$-transpose and the inverse of $M$, we have that
\beq
\bal \Ber_q(M^{\Pi\circ st})=\Ber_{q^{-1}}(M^{-1})=& \sum_{\rho \in \Sym_n}(-q)^{l(\rho)}M_{m+1,m+\rho(1)}\cdots M_{m+n,m+\rho(n)}\\
	&\times \sum_{\si \in \Sym_m}(-q)^{l(\si)}(M^{-1})_{\si(1),1}(M^{-1})_{\si(2),2}\cdots (M^{-1})_{\si(m),m}\\
	=&\big|(M^{-1})^{(m+1)}\big|^{-1}_{m+1,m+1} \cdots
	\big|(M^{-1})^{(m+n)}\big|^{-1}_{m+n,m+n}\\
	&\times \big|(M^{-1})^{(1)} \big|_{11} \cdots
	\big|(M^{-1})^{(m)}\big|_{mm}.
\eal
\eeq
It follows from Jacobi's ratio theorem \cite{GR1,KL}, that
\begin{equation}
\begin{aligned}
   &\big|(M^{-1})^{(m+1)}\big|^{-1}_{m+1,m+1} \cdots \big|(M^{-1})^{(m+n)}\big|^{-1}_{m+n,m+n}
   \big|(M^{-1})^{(1)} \big|_{11} \cdots
	\big|(M^{-1})^{(m)}\big|_{mm} \\
   &=\big|M_{(n)}\big|_{m+1,m+1}\cdots
   \big|M_{(1)}\big|_{m+n,m+n}
   \big|M_{(m+n)}\big|^{-1}_{11}\cdots
	\big|M_{(n+1)}\big|^{-1}_{mm}.
\end{aligned}
\end{equation}
\end{proof}

For any $i_1,\ldots,i_k,j_1,\ldots,j_k\in [m+n]$,
we denote by $A^{i_1,\ldots,i_k}_{j_1,\ldots,j_k}$ the matrix whose
$ab$-th entry is $A_{i_a j_b}$.
We have an analog of Proposition \ref{permutation} for quasideterminant.
\begin{prop}\label{permutation-qiasi}
\beq
\bal \Ber_q(M)=&\frac{\Inv(J_1)\Inv(J_2)}{\Inv(I_1)\Inv(I_2)}M_{i_1,j_1}\big|M^{i_1,i_2}_{j_1,j_2}\big|_{i_2,j_2}\cdots
	\big|M^{i_1,\ldots,i_m}_{j_1,\ldots,j_m}\big|_{i_m,j_m}\\
    & \times \big|M^{i_1,\ldots,i_{m+1}}_{j_1,\ldots,j_{m+1}}\big|^{-1}_{i_{m+1},j_{m+1}}\cdots
	\big|M^{i_1,\ldots,i_{m+n}}_{j_1,\ldots,j_{m+n}}\big|^{-1}_{i_{m+n},j_{m+n}}.
\eal
\eeq
where  $I_1=(i_1,\ldots,i_m), J_1=(j_1,\ldots,j_m)$ (resp. $I_2=(i_{m+1},\ldots,i_{m+n}), J_2=(j_{m+1},\ldots,j_{m+n})$) be two any rearrangements of $[m]$ (resp. $[m+n]\setminus[m]$).
\end{prop}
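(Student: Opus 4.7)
The plan is to combine Proposition \ref{permutation} with a generalized form of the quasideterminant decomposition used in the proof of Theorem \ref{quasi-decomp}. First, applying Proposition \ref{permutation} with the (even-part) column multi-index taken to be $J_1$ and the (odd-part) row multi-index taken to be the appropriate shift of $I_2$, I obtain
\[
\Inv(J_1)\Inv(I_2)\Ber_q(M) = S_{\mathrm{even}}\cdot S_{\mathrm{odd}},
\]
where $S_{\mathrm{even}}=\sum_{\sigma\in\Sym_m}(-q)^{-l(\sigma)}M_{\sigma(1),j_1}\cdots M_{\sigma(m),j_m}$ and $S_{\mathrm{odd}}$ is the analogous sum in the entries of $M^{-1}$ with rows $I_2$ and columns permuted by $\rho\in\Sym_n$. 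This reduces the problem to converting each of the two signed sums into a product of quasideterminants indexed by the prescribed rearrangements.

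The crucial new step is to establish a generalized quasideterminant decomposition: for any rearrangement $I_1=(i_1,\ldots,i_m)$ of $[m]$,
\[
S_{\mathrm{even}} = \Inv(I_1)\cdot M_{i_1,j_1}\,\big|M^{i_1,i_2}_{j_1,j_2}\big|_{i_2,j_2}\cdots \big|M^{i_1,\ldots,i_m}_{j_1,\ldots,j_m}\big|_{i_m,j_m}.
\]
This extends equation \eqref{quasi-decom1} (the $I_1=(1,\ldots,m)$ case, for which $\Inv(I_1)=1$). I would prove it by induction on the number of adjacent transpositions required to bring $I_1$ into $(1,\ldots,m)$: each swap $i_k\leftrightarrow i_{k+1}$ alters the quasideterminant product by a factor $(-q)^{\pm 1}$, matching precisely the corresponding change in $\Inv(I_1)$. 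This factor is extracted from the quadratic relations of $\Mc_{m|n}$ applied to the pair of consecutive quasideterminants $\big|M^{i_1,\ldots,i_k}_{j_1,\ldots,j_k}\big|_{i_k,j_k}$ and $\big|M^{i_1,\ldots,i_{k+1}}_{j_1,\ldots,j_{k+1}}\big|_{i_{k+1},j_{k+1}}$, together with the homological (Jacobi-type) identity for quasideterminants. A parallel decomposition applies to $S_{\mathrm{odd}}$ in terms of the column rearrangement $J_2$, facilitated by the row/column-exchanging isomorphism $\varrho_{m|n}$ from the proof of Theorem \ref{quasi-decomp}.

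Combining the two decompositions with the identity from Proposition \ref{permutation} and collecting the $\Inv$-factors yields the stated formula. The main obstacle is the inductive step of the generalized quasideterminant decomposition: under an adjacent row swap both the labels of the intermediate quasideterminants and the position of the boxed entry change, so one must verify carefully that the net effect on the product reduces to a single power of $-q$. This is a delicate combinatorial computation in $\Mc_{m|n}$ that mixes the $q$-super Manin relations with the standard noncommutative quasideterminant identities, and it is the technical heart of the argument.
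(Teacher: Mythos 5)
Your route is genuinely different from the paper's, and as written it has two gaps. The paper's proof is essentially a citation: the decomposition of the first factor of $\Ber_q(M)$ into quasideterminants with \emph{arbitrary} row and column orderings $I_1,J_1$ is taken wholesale from \cite[Theorem~3.3]{GR1} and \cite[Theorem~3.1]{KL}, and the second factor is handled by transporting the same statement through the isomorphism $\varrho_{m|n}$ to the left quantum superalgebra, exactly as in the proof of Theorem~\ref{quasi-decomp}. You instead use Proposition~\ref{permutation} to install the orderings $J_1$ and $I_2$, and then propose to prove the arbitrary-ordering decomposition of $S_{\mathrm{even}}$ from scratch by induction on adjacent transpositions of $I_1$. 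That induction \emph{is} the Krob--Leclerc theorem: under one adjacent swap two consecutive quasideterminant factors change (an index set and two boxed positions move), and showing that the Gelfand--Retakh homological relations combined with the $q$-super Manin relations collapse this to a single power of $(-q)$ is the entire content of the cited result. Since you defer exactly this step as ``a delicate combinatorial computation'', the proposal replaces the paper's citation by an unproved claim of comparable difficulty. You also do not explain how $S_{\mathrm{odd}}$, a signed sum in the entries of $M^{-1}$, becomes the stated product of \emph{inverse} quasideterminants of submatrices of $M$ itself; that conversion needs $(M^{-1})_{ij}=\big|M\big|_{ji}^{-1}$ together with Jacobi's ratio theorem, not only $\varrho_{m|n}$.

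Second, your normalizations do not close up against the statement. Combining your two steps gives
\begin{equation*}
\Ber_q(M)=\frac{\Inv(I_1)\,\Inv(J_2)}{\Inv(J_1)\,\Inv(I_2)}\,Q_{\mathrm{even}}\,Q_{\mathrm{odd}},
\end{equation*}
whereas the proposition asserts the prefactor $\frac{\Inv(J_1)\Inv(J_2)}{\Inv(I_1)\Inv(I_2)}$; these agree only when $l(I_1)=l(J_1)$ and $l(I_2)=l(J_2)$. The tension is already visible for $m=2$, $n=0$, $I_1=(2,1)$, $J_1=(1,2)$: using $M_{21}M_{11}=qM_{11}M_{21}$ one finds $M_{21}\big|M^{2,1}_{1,2}\big|_{1,2}=M_{21}M_{12}-q\,M_{11}M_{22}=-q\,(M_{11}M_{22}-q^{-1}M_{21}M_{12})$, which is consistent with your intermediate identity $S_{\mathrm{even}}=\Inv(I_1)Q_{\mathrm{even}}$ but not with the printed ratio. (Note that \cite[Theorem~3.1]{KL} is stated for quantum matrices, whose row relations differ from those of the right quantum superalgebra, so the exponent must be rechecked rather than copied.) Before your argument can stand you must resolve this mismatch explicitly: either your generalized decomposition carries a different power of $\Inv$ than you claim, or you are proving the reciprocally normalized version of the identity; at present your chain of equalities does not terminate at the displayed formula.
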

\begin{proof}
By the quasideterminant decompositions for quantum determinant in \cite[Theorem 3.3]{GR1} and  \cite[Theorem 3.1]{KL}, we have that
\beq
\bal
\Ber_q(M)
&=\big|M^{(1)}\big|_{11}\cdots
   \big|M^{(m)}\big|_{mm}\big|M^{(m+1)}\big|^{-1}_{m+1,m+1}\cdots
   \big|M^{(m+n)}\big|^{-1}_{m+n,m+n}\\
&=\frac{\Inv(J_1)}{\Inv(I_1)}M_{i_1,j_1}\big|M^{i_1,i_2}_{j_1,j_2}\big|_{i_2,j_2}\cdots
  \big|M^{i_1,\ldots,i_m}_{j_1,\ldots,j_m}\big|_{i_m,j_m}\big|M^{(m+1)}\big|^{-1}_{m+1,m+1}\cdots
  \big|M^{(m+n)}\big|^{-1}_{m+n,m+n},
\eal
\eeq
considering the image of the second factor under  the isomorphism  $\varrho_{m|n}$ which is the quantum determinant of left quantum superalgebra. So the second factor can be decomposed in the same way.
\end{proof}
We have the following theorem that is an analog of Jacobi's ratio theorem for the quantum Berezinian.
\begin{thm}\label{Jac rat}
Let $I=\{i_1<\cdots<i_k\}$ be a subset of $[m+n]$  and
$I^{c}=\{i_{k+1}<\cdots<i_{m+n}\}$ be its complement in $[m+n]$.
If subset $I$ satisfies one of the conditions: (i) $I\subseteq [m]$; (ii) $[m]\subseteq I$, then
\beq
\Ber_q(M)= \Ber_q(M_I) \Ber_{q^{-1}}(((M^{-1})_{I^c})^{\Pi \circ st}).
\eeq
\end{thm}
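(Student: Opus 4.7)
The plan is to prove both cases by expanding $\Ber_q(M)$ via Proposition~\ref{permutation-qiasi} with a carefully chosen rearrangement of the indices that isolates $I$, and then to identify the resulting factors using the Jacobi ratio theorem for quasideterminants (the same identity already cited from \cite{GR1,KL} in the proof of Corollary~\ref{pi-tran-quasi-decom}).

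For Case (i), where $I=\{i_1<\cdots<i_k\}\subseteq[m]$, I would rearrange $[m]$ as $(i_1,\ldots,i_k,i'_1,\ldots,i'_{m-k})$ with $i'_1<\cdots<i'_{m-k}$ enumerating $[m]\setminus I$, and keep the natural order on $[m+n]\setminus[m]$. Applying Proposition~\ref{permutation-qiasi} with $I_1=J_1$ and $I_2=J_2$ so that the $\Inv$ factors cancel, $\Ber_q(M)$ decomposes into $m+n$ quasideterminant factors. The first $k$ factors are principal quasideterminants indexed by the initial segments of $I$, and a second application of Proposition~\ref{permutation-qiasi} to the smaller matrix $M_I$ (noting that $\Ber_q(M_I)$ reduces to a pure $q$-quantum determinant because $I\subseteq[m]$) shows that these first $k$ factors combine to give precisely $\Ber_q(M_I)$.

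It then remains to identify the product $R$ of the trailing $m-k+n$ factors with $\Ber_{q^{-1}}(((M^{-1})_{I^c})^{\Pi\circ st})$. By relation~\eqref{pi-tran-rela} with $q$ and $q^{-1}$ interchanged, this target equals $\Ber_q(((M^{-1})_{I^c})^{-1})$, which admits its own quasideterminant decomposition via Proposition~\ref{permutation-qiasi} or Corollary~\ref{pi-tran-quasi-decom}. The remaining factors of $R$ are principal quasideterminants of nested submatrices of $M$ all containing $I$ in their row and column index sets; invoking the Jacobi ratio theorem for quasideterminants of \cite{GR1,KL} converts each such quasideterminant at position $(l,l)$ into the inverse of a quasideterminant of $M^{-1}$ on the complementary index set, matching the expansion on the target side term by term.

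Case (ii), where $[m]\subseteq I$, is treated symmetrically by rearranging $[m+n]\setminus[m]$ so that $I\cap([m+n]\setminus[m])$ comes first in the decomposition of $\Ber_q(M)$; here the odd part of $\Ber_q(M_I)$ is extracted from the tail of inverse quasideterminants while the head matches $\Ber_{q^{-1}}(((M^{-1})_{I^c})^{\Pi\circ st})$ via the same Jacobi ratio mechanism. The main obstacle I expect is the careful bookkeeping of signs, $q$-weights $\varepsilon(\sigma,I,J)$, and complementary index sets through the $\Pi$-transpose and supertranspose, together with verifying that each application of Jacobi's ratio is on compatible data; modulo this bookkeeping, the structural argument is a direct comparison of two quasideterminant expansions.
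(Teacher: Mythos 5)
Your proposal follows essentially the same route as the paper's proof: expand $\Ber_q(M)$ via Proposition~\ref{permutation-qiasi} with the indices of $I$ moved to the front, peel off the first $k$ factors as $\Ber_q(M_I)$, and convert the trailing quasideterminants into quasideterminants of $M^{-1}$ on complementary index sets via the Jacobi ratio theorem of \cite{GR1,KL}, recognizing the result through Corollary~\ref{pi-tran-quasi-decom}; case (ii) is handled by the symmetric rearrangement of the odd block, exactly as in the paper. The argument is correct and matches the paper's proof in structure and in the lemmas invoked.
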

\begin{proof}
We first prove the theorem in the case $I \subseteq [m]$.
According to Theorem \ref{quasi-decomp} and proposition \ref{permutation-qiasi},
\beq
\begin{split}
\Ber_q(M)
=&\big|M^{i_1}_{i_1}\big|_{i_1i_1}
\big|M^{i_1 i_2}_{i_1i_2}\big|_{i_2i_2}
\cdots\big|M^{i_1,\ldots,i_m}_{i_1,\ldots,i_m}\big|_{i_mi_m}\big|M^{(m+1)}\big|^{-1}_{m+1,m+1}\cdots
	\big|M^{(m+n)}\big|^{-1}_{m+n,m+n}\\
=&\Ber_q(M_I) \big|M^{i_1,\ldots,i_{k+1}}_{i_1,\ldots,i_{k+1}}\big|_{i_{k+1},i_{k+1}}
\cdots\big|M^{i_1,\ldots,i_m}_{i_1,\ldots,i_m}\big|_{i_mi_m}\big|M^{(m+1)}\big|^{-1}_{m+1,m+1}\cdots
	\big|M^{(m+n)}\big|^{-1}_{m+n,m+n}.
\end{split}
\eeq
By Jacobi's ratio theorem \cite{GR1,KL} for quasideterminant  and corollary \ref{pi-tran-quasi-decom},
\beq
\begin{split}
&\big|M^{i_1,\ldots,i_{k+1}}_{i_1,\ldots,i_{k+1}}\big|_{i_{k+1},i_{k+1}}
\cdots \big|M^{i_1,\ldots,i_m}_{i_1,\ldots,i_m}\big|_{i_m,i_m}\big|M^{(m+1)}\big|^{-1}_{m+1,m+1}\cdots
\big|M^{(m+n)}\big|^{-1}_{m+n,m+n}\\
=&\big|(M^{-1})^{i_{k+1},\ldots,i_{m+n}}_{i_{k+1},\ldots,i_{m+n}}\big|_{i_{k+1},i_{k+1}}^{-1}\cdots
\big|(M^{-1})^{i_m,\ldots,i_{m+n}}_{i_m,\ldots,i_{m+n}}\big|_{i_{m},i_{m}}^{-1}
\\
&\times\big|(M^{-1})^{m+1,\ldots,m+n}_{m+1,\ldots,m+n}\big|_{m+1,m+1}\ldots\big|(M^{-1})^{m+n}_{{m+n}}\big|_{m+n,m+n}\\
=&\Ber_{q^{-1}}(((M^{-1})_{I^c})^{\Pi \circ st}).
\end{split}
\eeq

 When $[m]\subseteq I$, by Theorem \ref{quasi-decomp} and proposition \ref{permutation-qiasi}, we have that
\beq
\begin{split}
\Ber_q(M)
=&\big|M^{(1)}\big|_{11}\cdots \big|M^{(m)}\big|_{mm}\big|M^{1,\ldots,m,i_{k-m}}_{1,\ldots,m,i_{k-m}}\big|^{-1}_{i_{k-m}i_{k-m}}
\cdots \\
&\cdots \big|M^{1,\ldots,m,i_{k-m},\dots,i_k}_{1,\ldots,m,i_{k-m},\ldots,i_k}\big|^{-1}_{i_{k}i_{k}}
\cdots\big|M^{1,\ldots,m,i_{k-m},\ldots,i_{m+n}}_{1,\ldots,m,i_{k-m},\ldots,i_{m+n}}\big|^{-1}_{i_{m+n}i_{m+n}}\\
=&\Ber_q(M_I) \big|M^{1,\ldots,m,i_{k-m},\dots,i_{k+1}}_{1,\ldots,m,i_{k-m},\ldots,i_{k+1}}\big|^{-1}_{i_{k+1}i_{k+1}}
\cdots\big|M^{1,\ldots,m,i_{k-m},\ldots,i_{m+n}}_{1,\ldots,m,i_{k-m},\ldots,i_{m+n}}\big|^{-1}_{i_{m+n}i_{m+n}}.
\end{split}
\eeq
By Jacobi's ratio theorem for quasideterminant  and corollary \ref{pi-tran-quasi-decom},
\beq
\begin{split}
&\big|M^{1,\ldots,m,i_{k-m},\dots,i_{k+1}}_{1,\ldots,m,i_{k-m},\ldots,i_{k+1}}\big|^{-1}_{i_{k+1}i_{k+1}}
\cdots\big|M^{1,\ldots,m,i_{k-m},\ldots,i_{m+n}}_{1,\ldots,m,i_{k-m},\ldots,i_{m+n}}\big|^{-1}_{i_{m+n}i_{m+n}}\\
=&\big|(M^{-1})^{i_{k+1},\ldots,i_{m+n}}_{i_{k+1},\ldots,i_{m+n}}\big|_{i_{k+1},i_{k+1}}\cdots
\big|(M^{-1})^{i_{m+n}}_{i_{m+n}}\big|_{i_{m+n},i_{m+n}}\\
=&\Ber_{q^{-1}}(((M^{-1})_{I^c})^{\Pi \circ st}).
\end{split}
\eeq
\end{proof}

Taking $I=\emptyset$ in the above theorem, we immediately get the similar relation to \eqref{pi-tran-rela}:
\begin{cor}\label{ber inverse}
\beq
\Ber_q(M)=\Ber_{q^{-1}}((M^{-1})^{\Pi  \circ st}).
\eeq
\end{cor}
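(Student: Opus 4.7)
The plan is to derive this corollary as an immediate specialization of Theorem \ref{Jac rat}, exactly as the transitional sentence preceding it signals. I would take $I = \emptyset$, which trivially satisfies the first hypothesis $I \subseteq [m]$. Then $I^c = [m+n]$, so $(M^{-1})_{I^c} = M^{-1}$ is the full inverse matrix, while $M_I$ is the $0 \times 0$ empty matrix.

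The only auxiliary fact needed is the normalization $\Ber_q(M_\emptyset) = 1$. This follows either from the explicit formula \eqref{Bere}, where both the $\Sym_m$ and $\Sym_n$ sums degenerate to empty products when $m = n = 0$, or from the cohomological definition, since the generator of $H(\Kc_{0|0})$ is the scalar $1 \in \CC$ whose coaction under $\delta_H$ is the trivial $1 \mapsto 1 \otimes 1$.

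Substituting these observations into the identity supplied by Theorem \ref{Jac rat},
\[
\Ber_q(M) \;=\; \Ber_q(M_\emptyset)\cdot \Ber_{q^{-1}}\bigl(((M^{-1})_{[m+n]})^{\Pi\circ st}\bigr),
\]
the right-hand side collapses to $\Ber_{q^{-1}}((M^{-1})^{\Pi\circ st})$, which is the desired equality. There is no real obstacle here: the corollary is a one-line specialization of the previously established Jacobi-ratio result, and the only verification required is the empty-Berezinian convention, which is immediate from either the explicit or the coaction-theoretic definition. As a sanity check, one may observe that this identity is the natural pairing partner to relation \eqref{pi-tran-rela}, with the roles of $M$ and $M^{-1}$ interchanged.
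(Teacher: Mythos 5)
Your proposal is correct and follows exactly the paper's own route: the corollary is obtained by setting $I=\emptyset$ in Theorem \ref{Jac rat}, so that $I^c=[m+n]$ and the factor $\Ber_q(M_I)$ degenerates to $1$. Your explicit verification of the empty-Berezinian normalization is a harmless addition that the paper leaves implicit.
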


The following is an analogue of Schur's complement theorem.
\begin{thm}\label{Sch com}
Write $M$
in block matrix,
\[
M=\left(\begin{array}{cc}
	M_{11}&M_{12}\\
	M_{21}&M_{22}\\
\end{array}\right)
\]
such that $M_{11}$ and $M_{22}$ are submatrices of size $k\times k$ and $(m+n-k)\times(m+n-k)$ respectively.
Then
\beq
\Ber_q(M)=\Ber_q(M_{11}) \Ber_{q} \left(M_{22}-M_{21}M_{11}^{-1}M_{12} \right).
\eeq
\end{thm}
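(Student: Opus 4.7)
The plan is to derive the identity by combining the quasideterminant decomposition of $\Ber_q$ from Theorem \ref{quasi-decomp} with the heredity (noncommutative Schur complement) principle for quasideterminants.

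First, I would apply Theorem \ref{quasi-decomp} to $\Ber_q(M)$, expressing it as the telescoping product
\beq
\Ber_q(M)=\prod_{j=1}^{m}\big|M^{(j)}\big|_{jj}\cdot\prod_{j=m+1}^{m+n}\big|M^{(j)}\big|_{jj}^{-1},
\eeq
and then split this product at $j=k$. The factors with $j\leq k$ should recover $\Ber_q(M_{11})$, while the factors with $j>k$ should recover $\Ber_q(S)$ for the Schur complement $S=M_{22}-M_{21}M_{11}^{-1}M_{12}$.

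For the initial segment $j\leq k$ the identification is immediate, since $M^{(j)}=M_{11}^{(j)}$ and the parity of index $j$ agrees in $M$ and $M_{11}$. A case split on whether $k\leq m$ or $k>m$ is needed (determining how many factors carry exponent $+1$ versus $-1$), but in both cases the first $k$ quasideterminants assemble exactly into the decomposition of $\Ber_q(M_{11})$ provided by Theorem \ref{quasi-decomp} applied to the graded matrix $M_{11}$ of size $\min(k,m)\mid\max(0,k-m)$.

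The substantive step is the second segment. For each $j>k$ I would invoke the heredity property of quasideterminants applied to the block decomposition of $M^{(j)}$ with upper-left block $M_{11}$, yielding
\beq
\big|M^{(j)}\big|_{jj}=\big|S^{(j-k)}\big|_{j-k,j-k},
\eeq
because the Schur-complement formula only involves entries of $M^{(j)}$ and produces precisely the leading $(j-k)\times(j-k)$ submatrix of $S$. The grading of $S$ is $(m-k)\mid n$ when $k\leq m$ and $0\mid(m+n-k)$ when $k>m$; in either case the parity of position $j-k$ in $S$ matches that of position $j$ in $M$, so the exponents $\pm 1$ transfer correctly. Theorem \ref{quasi-decomp} applied to $S$ then collapses the remaining product to $\Ber_q(S)$.

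The main obstacle is the rigorous justification of the heredity formula for quasideterminants inside the Hopf superalgebra $\widetilde{\Mc}_{m|n}$, where one must invoke invertibility of the relevant principal submatrices of $M_{11}$. This reduces to the classical Gelfand--Retakh heredity identity once we pass to the appropriate localisation, which is already implicit in the construction of $\Ber_q$ within $\widetilde{\Mc}_{m|n}$. Given this, the proof is essentially a bookkeeping argument aligning parities and exponents across the two halves of the telescoping product.
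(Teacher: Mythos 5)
Your argument is correct, but it follows a genuinely different route from the paper's. The paper proves the theorem in three lines by quoting results it has already established: writing $N=M^{-1}$ in blocks, it uses the classical block-inverse identity $N_{22}=(M_{22}-M_{21}M_{11}^{-1}M_{12})^{-1}$, applies Theorem~\ref{Jac rat} with $I=[k]$ to get $\Ber_q(M)=\Ber_q(M_{11})\,\Ber_{q^{-1}}((N_{22})^{\Pi\circ st})$, and then converts the second factor into $\Ber_q(M_{22}-M_{21}M_{11}^{-1}M_{12})$ via Corollary~\ref{ber inverse}. You instead split the telescoping quasideterminant product of Theorem~\ref{quasi-decomp} at $j=k$ and identify the tail factors with quasideterminants of the Schur complement by the Gelfand--Retakh heredity principle; your parity bookkeeping for the two cases $k\le m$ and $k>m$ is right, and the identification $M^{(j)}/M_{11}=S^{(j-k)}$ is exactly what heredity gives. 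The two proofs are cousins at bottom (heredity itself is the block-inverse formula read through $|A|_{ij}=((A^{-1})_{ji})^{-1}$), but yours is more self-contained, bypassing Theorem~\ref{Jac rat} and Corollary~\ref{ber inverse} entirely, while the paper's is shorter given the machinery already in place. One caveat applies equally to both arguments: applying Theorem~\ref{quasi-decomp} (or Corollary~\ref{ber inverse}) to the Schur complement $S$ tacitly treats $S$ as a $q$-super Manin matrix of grading $(m-k)\mid n$, which is a nontrivial closure property that neither proof verifies; you at least flag the localisation and invertibility issues explicitly, which the paper does not.
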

\begin{proof}
Let
\[
N=M^{-1}=\left(\begin{array}{cc}
	N_{11}&N_{12}\\
	N_{21}&N_{22}\\
\end{array}\right).
\]
It is well known that
\[
N_{22}=(M_{22}-M_{21}M_{11}^{-1}M_{12})^{-1}.
\]
By Theorem \ref{Jac rat},
\beq
\Ber_q(M)=\Ber_q(M_{11})  \Ber_{q^{-1}}((N_{22})^{\Pi \circ st}).
\eeq
It follows from Corollary \ref{ber inverse} that
\beq
  \Ber_{q^{-1}}((N_{22})^{\Pi \circ st}) =\Ber_{q} \\ \left(M_{22}-M_{21}M_{11}^{-1}M_{12} \right).
\eeq
This completes the proof.
\end{proof}

Taking $k=m$ in theorem \ref{Sch com} we have the following formula which is analog of the definition of Berezinian for the classical super matrix.
\begin{cor}
If $k=m$ in Theorem \ref{Sch com}, then
\beq
\Ber_q(M)={\det}_q(M_{11}){\det}_{q^{-1}} \left(M_{22}-M_{21}M_{11}^{-1}M_{12} \right)^{-1}.
\eeq
\end{cor}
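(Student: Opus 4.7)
The plan is to specialize Theorem \ref{Sch com} to $k=m$, which immediately gives the factorization $\Ber_q(M)=\Ber_q(M_{11})\,\Ber_q(N)$ with $N=M_{22}-M_{21}M_{11}^{-1}M_{12}$. It then remains to identify each of the two Berezinians with the asserted $q$- and $q^{-1}$-determinants.

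For the first factor, since $M_{11}$ is $m\times m$ with all row and column indices lying in $[m]$, I regard it as an $m|0$ super-Manin matrix. In the explicit formula \eqref{Bere} the second sum (over $\Sym_n$ with $n=0$) degenerates to the empty product $1$, and the first sum is precisely $\det_q(M_{11})$; equivalently, this is Theorem \ref{quasi-decomp} specialized to $n=0$. Hence $\Ber_q(M_{11})=\det_q(M_{11})$.

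For the second factor, $N$ is $n\times n$ with all indices in $[m+n]\setminus[m]$, so I regard it as a $0|n$ super-Manin matrix. Theorem \ref{quasi-decomp} then gives $\Ber_q(N)=|N^{(1)}|_{11}^{-1}|N^{(2)}|_{22}^{-1}\cdots|N^{(n)}|_{nn}^{-1}$. Inspecting the defining relations \eqref{M-rel} under $\bar i=1$ on all indices shows that the row-commutation relations on $N$ are those of a $q^{-1}$-Manin matrix (the sign $(-1)^{(\bar k+1)(\bar l+1)}$ is trivial and the power of $q$ flips sign relative to the purely even block), and consequently the quasideterminant decomposition of $\det_{q^{-1}}(N)$ assembles the same factors $|N^{(k)}|_{kk}$. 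Reversing the order via $(ab)^{-1}=b^{-1}a^{-1}$ then yields $\Ber_q(N)=\det_{q^{-1}}(N)^{-1}$.

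Combining the two identifications gives the stated formula. I expect the second identification to be the main obstacle: while the reduction of $\Ber_q(N)$ to a product of inverse quasideterminants is immediate from Theorem \ref{quasi-decomp}, establishing precise equality with $\det_{q^{-1}}(N)^{-1}$ requires careful bookkeeping of the ordering of the quasideterminant factors and of the $q\leftrightarrow q^{-1}$ swap forced by the purely odd index structure on $N$; the first identification and the invocation of Theorem \ref{Sch com} are essentially bookkeeping.
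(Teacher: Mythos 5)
Your proposal is correct and follows exactly the route the paper intends: the paper offers no written proof beyond ``taking $k=m$ in Theorem \ref{Sch com},'' and your argument is precisely that specialization together with the identification of $\Ber_q$ of the purely even block with $\det_q$ (via \eqref{Bere} with $n=0$) and of $\Ber_q$ of the purely odd Schur complement with $\det_{q^{-1}}(\cdot)^{-1}$ (via Theorem \ref{quasi-decomp} with $m=0$ and the reversed quasideterminant ordering). The bookkeeping you flag on the ordering of the factors $\bigl|N^{(k)}\bigr|_{kk}$ and the $q\leftrightarrow q^{-1}$ swap is exactly the content already handled by the paper's $\varrho_{m|n}$ isomorphism in the proof of Theorem \ref{quasi-decomp}, so nothing essential is missing.
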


\begin{thm}[Cayley's complementary identity]\label{Cay com}
Suppose a minor identity for quantum Berezinians is given:
\beq
\sum_{i=1}^{k}b_i\prod_{j=1}^{m_i}\Ber_q(M_{I_{ij}})=0,
\eeq
where $I_{ij}$ are increasing  multi-indices satisfying one of the conditions: (i) $I_{ij} \subseteq [m+n]\setminus [m]$; (ii) $[m+n]\setminus [m]\subseteq I_{ij}$, and $b_i\in \CC(q)$, then when $I_{ij}$ satisfies condition (i) or (ii), the following identity holds
\beq\label{cayley-com2}
\sum_{i=1}^{k}b'_i\prod_{j=1}^{m_i}\Ber_q(M_{I_{ij}^c})^{-1}\Ber_q(M)=0,
\eeq
where $b'_i$ is obtained from $b_i$ by replacing $q$ by $\qin$.
\end{thm}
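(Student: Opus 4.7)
The strategy is to combine Jacobi's ratio theorem (Theorem \ref{Jac rat}) with the inversion duality of the quantum Berezinian (Corollary \ref{ber inverse}) to transport the given identity into its complementary form.

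The first observation is that under Cayley's condition on $I_{ij}$, the complementary set $I_{ij}^c$ automatically fulfills the hypothesis of Jacobi's ratio theorem. Indeed, Cayley's condition (i) ($I_{ij}\subseteq[m+n]\setminus[m]$) implies $[m]\subseteq I_{ij}^c$, which is Jacobi's condition (ii); symmetrically, Cayley's (ii) implies Jacobi's (i). Hence applying Theorem \ref{Jac rat} with $I=I_{ij}^c$ yields
\[
\Ber_q(M_{I_{ij}^c})^{-1}\Ber_q(M) \;=\; \Ber_{q^{-1}}\bigl(((M^{-1})_{I_{ij}})^{\Pi\circ st}\bigr),
\]
so that the target identity is equivalent to
\[
\sum_{i=1}^{k} b_i'\prod_{j=1}^{m_i} \Ber_{q^{-1}}\bigl(((M^{-1})_{I_{ij}})^{\Pi\circ st}\bigr)=0. \quad(\star)
\]

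To establish $(\star)$, I would exhibit it as the given identity applied to the dual matrix $M^{\ast}:=(M^{-1})^{\Pi\circ st}$. Corollary \ref{ber inverse} (together with the construction of $\widetilde{\Mc}_{m|n}$ in the paper) shows that $M^{\ast}$ is a $q^{-1}$-super Manin matrix, lying in $\widetilde{\Mc}_{n|m}$. Viewing the given identity as a universal polynomial relation among the Berezinians of a $q$-super Manin matrix, I would substitute $M\mapsto M^{\ast}$ together with $q\mapsto q^{-1}$; the latter substitution is what sends each coefficient $b_i$ to $b_i'$. Under the order-reversing bijection $k\mapsto m+n+1-k$ between the labelling of $[m+n]$ in $\widetilde{\Mc}_{m|n}$ and in $\widetilde{\Mc}_{n|m}$, each subset $I_{ij}$ corresponds to a subset $\widetilde{I_{ij}}$ of $[m+n]$ for $\Mc_{n|m}$, and a direct computation identifies $(M^{\ast})_{\widetilde{I_{ij}}}$ with $((M^{-1})_{I_{ij}})^{\Pi\circ st}$, turning the substituted identity into $(\star)$.

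The main obstacle lies in the final identification. Concretely, one must track how the $\Pi\circ st$ transformation acts on each submatrix under the super-type switch from $m|n$ to $n|m$, verify the equality $(M^{\ast})_{\widetilde{I_{ij}}}=((M^{-1})_{I_{ij}})^{\Pi\circ st}$ up to the signs and reorderings implicit in the supertranspose, and confirm that Cayley's hypotheses on $I_{ij}$ translate into conditions on $\widetilde{I_{ij}}$ compatible with applying the given identity in the swapped super-structure. These are routine but sign-sensitive bookkeeping checks once the index correspondence is made explicit; the first two paragraphs above reduce the entire theorem to this combinatorial step.
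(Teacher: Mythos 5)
Your proposal takes essentially the same route as the paper's (very terse) proof: the paper applies the given minor identity to the dual matrix $(M^{-1})^{\Pi\circ st}$ to obtain $\sum_i b_i'\prod_j \Ber_{q^{-1}}\bigl(((M^{-1})_{I_{ij}})^{\Pi\circ st}\bigr)=0$ and then converts each factor via Jacobi's ratio theorem (Theorem \ref{Jac rat}), which is exactly your two steps carried out in the opposite order. Your version is actually more careful than the paper's, since you make explicit the translation of the Cayley hypotheses on $I_{ij}$ into the Jacobi hypotheses on $I_{ij}^c$ and flag the index-correspondence bookkeeping that the paper leaves entirely implicit.
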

\begin{proof}
By applying the minor identity to $(M^{-1})^{\Pi \circ st}$, we get that
\[
\sum_{i=1}^{k}b_i'\prod_{j=1}^{m_i}\Ber_{q^{-1}}\left((M^{-1}_{I_{ij}})^{\Pi\circ st}\right)=0,
\]
By theorem \ref{Jac rat} we obtain the equation \eqref{cayley-com2}.
\end{proof}

\begin{thm}[Muir's law]\label{Muir law}
Let $I=(i_1<\ldots<i_l) \subseteq [m+n]\setminus [m]$, Assume a minor identity for quantum Berezinians
\beq
\sum_{i=1}^{k}b_i\prod_{j=1}^{m_i}\Ber_q\left(M_{I_{ij}}\right)=0,
\eeq
where  $I_{ij} \subseteq  I$  are increasing  multi-indices and $b_i\in \CC(q)$.
  then the following identity holds
\beq
\sum_{i=1}^{k}b_i\prod_{j=1}^{m_i}\Ber_q\left(M_{I_{ij}\bigcup I^c}\right)\Ber_q\left(M_{{I^c}}\right)^{-1}
=0.
\eeq
\end{thm}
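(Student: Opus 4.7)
The plan is to recognize each factor $\Ber_q(M_{I_{ij}\cup I^c})\Ber_q(M_{I^c})^{-1}$ as a conjugate of a Berezinian of a Schur-complement matrix indexed by $I$, and then apply the hypothesized minor identity to this Schur complement. This mirrors the classical proof of Muir's law through Jacobi's ratio.

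First, I apply Theorem \ref{Jac rat} to the submatrix $M_{I_{ij}\cup I^c}$ with the distinguished subset taken to be $I^c$. Since $I\subseteq[m+n]\setminus[m]$ forces $[m]\subseteq I^c$, condition (ii) of Theorem \ref{Jac rat} is satisfied, yielding $\Ber_q(M_{I_{ij}\cup I^c})=\Ber_q(M_{I^c})\,\Ber_{q^{-1}}\big(((M_{I_{ij}\cup I^c})^{-1}|_{I_{ij}})^{\Pi\circ st}\big)$. By the standard noncommutative block-inversion formula, $(M_{I_{ij}\cup I^c})^{-1}|_{I_{ij}}$ equals the inverse of the Schur complement $\tilde M_{I_{ij}}:=M_{I_{ij}}-M_{I_{ij},I^c}(M_{I^c})^{-1}M_{I^c,I_{ij}}$, and Corollary \ref{ber inverse} then identifies the right-hand factor as $\Ber_q(\tilde M_{I_{ij}})$. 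Thus $\Ber_q(M_{I_{ij}\cup I^c})=\Ber_q(M_{I^c})\,\Ber_q(\tilde M_{I_{ij}})$.

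Second, I verify that the global Schur complement $\tilde M:=M_I-M_{I,I^c}(M_{I^c})^{-1}M_{I^c,I}$ is itself a $q$-super Manin matrix indexed by $I\subseteq[m+n]\setminus[m]$. Using the equivalent characterization $\mathcal{A}_2^q\tilde M_1\tilde M_2\mathcal{H}_2^q=0$ from Proposition \ref{Manin-equi-rela}, this should follow from the defining Manin relations of $M$ by a direct but somewhat involved computation paralleling the commutative Schur-complement manipulation. Granted this, each submatrix $\tilde M_{I_{ij}}$ is again a $q$-super Manin matrix, so the hypothesized minor identity applied to $\tilde M$ gives $\sum_i b_i\prod_j\Ber_q(\tilde M_{I_{ij}})=0$.

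Finally, setting $C:=\Ber_q(M_{I^c})$ and $B_j:=\Ber_q(\tilde M_{I_{ij}})$, the first step reads $\Ber_q(M_{I_{ij}\cup I^c})C^{-1}=CB_jC^{-1}$. The product telescopes as $\prod_j(CB_jC^{-1})=C\big(\prod_j B_j\big)C^{-1}$, hence $\sum_i b_i\prod_j\Ber_q(M_{I_{ij}\cup I^c})C^{-1}=C\big(\sum_i b_i\prod_j B_j\big)C^{-1}=0$, which is the claim. The main obstacle is the middle step: verifying that the noncommutative Schur complement of a $q$-super Manin matrix is again a $q$-super Manin matrix. An alternative route would bypass this by working directly with the quasideterminant decompositions in Theorem \ref{quasi-decomp} and Proposition \ref{permutation-qiasi}, at the cost of heavier combinatorics.
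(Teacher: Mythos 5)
Your route is genuinely different from the paper's: the paper deduces Muir's law formally from two successive applications of Cayley's complementary identity (Theorem \ref{Cay com}), first inside $M_I$ and then inside $M$, never leaving the world of minors of $M$ and $M^{-1}$. You instead try to realize each quotient $\Ber_q(M_{I_{ij}\cup I^c})\Ber_q(M_{I^c})^{-1}$ as (a conjugate of) the Berezinian of a submatrix of the Schur complement $\tilde M=M_I-M_{I,I^c}(M_{I^c})^{-1}M_{I^c,I}$ and then apply the hypothesized identity to $\tilde M$. Your first step (Jacobi with distinguished subset $I^c\supseteq[m]$, block inversion, and Corollary \ref{ber inverse}) is sound and is exactly the mechanism of the paper's own proof of Theorem \ref{Sch com}, and the final conjugation/telescoping is correct provided the single $\Ber_q(M_{I^c})^{-1}$ in the statement is read as occurring inside the product over $j$, i.e.\ one factor $\Ber_q(M_{I_{ij}\cup I^c})\Ber_q(M_{I^c})^{-1}$ per $j$ (which is the reading forced by the classical analogue and by the way Cayley's identity is derived from Jacobi).

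The genuine gap is your middle step, and it is essential, not cosmetic. You need (a) that $\tilde M$ is again a $q$-super Manin matrix of format $0|l$, and (b) that the hypothesized minor identity is a \emph{universal} identity for all $q$-super Manin matrices of that format, so that it may be evaluated on $\tilde M$, whose entries live in the localization $\widetilde{\Mc}_{m|n}$ rather than in $\Mc_{0|l}$. Point (a) is not available anywhere in the paper: the only Schur-complement-is-Manin statement the paper proves is the Sylvester setup, where the block being inverted is the initial purely even block $\{1,\ldots,k\}$ and the Manin property comes for free because $\psi_k=\om_{k+m|n}\circ\phi_k\circ\om_{m|n}$ is built from (anti)homomorphisms of the Hopf algebras $\widetilde{\Mc}$. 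Here the inverted block is $M_{I^c}$ with $I^c=[m]\cup([m+n]\setminus[m]\setminus I)$, a mixed even/odd set that is not an initial segment, and no analogue of $\psi_k$ is constructed for it; the "direct but somewhat involved computation" you defer is precisely the content that would have to be supplied, and checking $\Ac_2^q\tilde M_1\tilde M_2\Hc_2^q=0$ from \eqref{M-rel} in this mixed situation is a substantial verification, not a routine one. Point (b) is admittedly on the same footing as the paper's own use of Cayley (which applies the minor identity to $(M^{-1})^{\Pi\circ st}$), so it is consistent with the paper's conventions, but it should at least be stated. As written, then, the proposal is an outline whose central lemma is unproved; the paper's two-step Cayley argument avoids the Schur complement entirely and is the shorter path given the results already established.
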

\begin{proof}
Note that $I, I \setminus I_{ij}\subseteq [m+n]\setminus [m]$, it results from two successive applications of Theorem \ref{Cay com}. The first application in $M_I$ shows that
\beq
\sum_{i=1}^{k}b'_i\prod_{j=1}^{m_i}\Ber_q(M_{I \setminus I_{ij}})^{-1}\Ber_q(M_I)=0.
\eeq
And by the second application in $M$, we obtain that
\beq
\sum_{i=1}^{k}b_i\prod_{j=1}^{m_i}\Ber_q\left(M_{I_{ij}\bigcup I^c}\right)\Ber_q\left(M_{{I^c}}\right)^{-1}
=0.
\eeq
\end{proof}

For any $k\geq 0$, we define the homomorphism
\beq
\bal
\phi_k: \mathcal{M}_{m|n}&\rightarrow \mathcal{M}_{k+m|n}\\
M_{ij}&\mapsto M_{k+i,k+j},
\eal
\eeq
and an anti-isomorphism of $\widetilde{\Mc}_{m|n}$:
\beq
\om_{m|n}(M)=M^{-1}.
\eeq
Considering the composition
\beq
\psi_k=\om_{k+m|n}\circ\phi_k\circ\om_{m|n}.
\eeq
By the same argument in \cite[Lemma 1.11.2 ]{Mo1}, we have the lemma.
\begin{lem}
For any $1\leq i,j\leq m+n$, we have
\beq
\psi_k(M_{ij})=\left|\begin{array}{cccc}
	M_{11}&\cdots&M_{1k}&M_{1,k+j}\\
	\vdots&&\vdots&\vdots\\
	M_{k1}&\cdots&M_{kk}&M_{k,k+j}\\ M_{k+i,1}&\cdots&M_{k+i,k}&\framebox{$M_{k+i,k+j}$}
\end{array}\right|.
\eeq
\end{lem}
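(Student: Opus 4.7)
The plan is to unpack the composition $\psi_k=\om_{k+m|n}\circ \phi_k\circ \om_{m|n}$ applied to the generator $M_{ij}$ and identify the outcome with a Schur-complement expression, which then matches the displayed quasideterminant. By the defining property $\om_{m|n}(M)=M^{-1}$, one has $\om_{m|n}(M_{ij})=(M^{-1})_{ij}$ in $\widetilde{\Mc}_{m|n}$.

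Next, I would write the generator matrix of $\widetilde{\Mc}_{k+m|n}$ in block form
\[
M=\begin{pmatrix} A & B \\ C & D\end{pmatrix},
\]
with $A$ the upper-left $k\times k$ block. The shift homomorphism $\phi_k$ identifies the generator matrix of $\widetilde{\Mc}_{m|n}$ with the lower-right block $D$, and hence respects inversion, so $\phi_k(\om_{m|n}(M_{ij}))=(D^{-1})_{ij}$. Now $\om_{k+m|n}$ sends $D$ entrywise to the matrix $N$ whose $(a,b)$ entry is $(M^{-1})_{k+a,k+b}$, i.e.\ the lower-right $(m+n)\times(m+n)$ block of $M^{-1}$ in $\widetilde{\Mc}_{k+m|n}$. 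Since $\om_{k+m|n}$ is an anti-isomorphism, applying it to the identity $DD^{-1}=I$ and rearranging gives $\om_{k+m|n}((D^{-1})_{ij})=(N^{-1})_{ij}$. Hence $\psi_k(M_{ij})=(N^{-1})_{ij}$.

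The last step is to compute $N^{-1}$ via the formal block $LDU$ factorization
\[
\begin{pmatrix}A&B\\C&D\end{pmatrix}=\begin{pmatrix}I&0\\CA^{-1}&I\end{pmatrix}\begin{pmatrix}A&0\\0&D-CA^{-1}B\end{pmatrix}\begin{pmatrix}I&A^{-1}B\\0&I\end{pmatrix},
\]
which is valid over any associative algebra once the necessary inverses are adjoined. Inverting yields $N=(D-CA^{-1}B)^{-1}$, so $N^{-1}=D-CA^{-1}B$. Consequently
\[
\psi_k(M_{ij})=M_{k+i,k+j}-\sum_{a,b=1}^{k}M_{k+i,a}(A^{-1})_{ab}M_{b,k+j},
\]
which is exactly the definition of the quasideterminant on the right-hand side of the lemma. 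The main step requiring care is the compatibility of the anti-isomorphism $\om_{k+m|n}$ with matrix inversion; once this is pinned down by applying $\om_{k+m|n}$ to $DD^{-1}=I$, the remainder is the classical Schur-complement computation performed formally in a noncommutative setting.
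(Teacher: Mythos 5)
Your overall route is the one the paper intends (its proof is a one-line citation of Molev's Lemma 1.11.2 ``by the same argument''): unpack $\psi_k=\om_{k+m|n}\circ\phi_k\circ\om_{m|n}$, reduce $\psi_k(M_{ij})$ to an entry of the inverse of the lower-right block $N$ of $M^{-1}$ in $\widetilde{\Mc}_{k+m|n}$, and identify that with the Schur complement $D-CA^{-1}B$, whose $(i,j)$ entry is by definition the displayed quasideterminant. The first two steps and the final block-$LDU$ computation are fine (the block inversion formula is valid over a noncommutative ring once the needed inverses exist).

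The gap is in the third step, exactly where you yourself say care is required. For an anti-homomorphism $\om$, applying it entrywise to $\sum_a D_{ia}(D^{-1})_{aj}=\delta_{ij}$ gives $\sum_a \om\bigl((D^{-1})_{aj}\bigr)\,\om(D_{ia})=\delta_{ij}$, together with super signs coming from $\imath(uv)=(-1)^{\bar u\bar v}\imath(v)\imath(u)$ which you do not track. Writing $F_{ab}=\om\bigl((D^{-1})_{ab}\bigr)$ and $N_{ab}=\om(D_{ab})$, this identity says $F^{t}N^{t}=I$ (and the companion relation gives $N^{t}F^{t}=I$), i.e. $F^{t}=(N^{t})^{-1}$ --- it does \emph{not} say $F=N^{-1}$. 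Over a noncommutative ring, transposition and inversion do not commute, so $\bigl((N^{t})^{-1}\bigr)^{t}\neq N^{-1}$ in general, and the conclusion $\psi_k(M_{ij})=(N^{-1})_{ij}$ does not follow from ``applying $\om_{k+m|n}$ to $DD^{-1}=I$ and rearranging.'' In Molev's original setting the corresponding map $\om_n\colon T(u)\mapsto T^{-1}(-u-n/2)$ is an \emph{automorphism}, so his version of this step is immediate; here the paper declares $\om_{m|n}$ to be an anti-isomorphism (essentially the antipode $\imath$), and transplanting the step verbatim requires either proving that the relevant transposed inverse coincides with $N^{-1}$ (using the extra relations of $\widetilde{\Mc}_{k+m|n}$, such as $\bar M_{k}^{st^{k}}\bar M_{k+1}^{st^{k}}=I$, which control how supertransposition interacts with inversion for the generator matrix) or carrying the transposes and super signs through to the final quasideterminant. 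As written, the proposal has a genuine hole at this point.
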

The following Sylvester theorem for the quantum Berezinian follows from the above Lemma.
\begin{thm}\label{syl}
Denote the generator matrix of $\mathcal{M}_{k+m|n}$ by $M_{k+m|n}$, then
\beq
\psi_k(\Ber_q(M))={\det}_{q}(M^{1,\ldots, k}_{1,\ldots, k})^{-1}\Ber_q(M_{k+m|n}).
\eeq
\end{thm}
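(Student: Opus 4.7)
The plan is to apply the algebra homomorphism $\psi_k$ to the quasideterminant factorisation of $\Ber_q(M)$ from Theorem~\ref{quasi-decomp} and recognise the outcome as all but the first $k$ factors of the analogous factorisation of $\Ber_q(M_{k+m|n})$. Because $\phi_k$ is an algebra homomorphism while $\omega_{m|n}$ and $\omega_{k+m|n}$ are anti-isomorphisms, their composition $\psi_k$ is itself an algebra homomorphism, and so it sends the quasideterminant of any submatrix of $M$ to the quasideterminant of the matrix $B$ with entries $B_{ij}:=\psi_k(M_{ij})$; the preceding Lemma identifies these entries with the bordered quasideterminants $\bigl|M_{k+m|n}^{1,\ldots,k,k+i}_{1,\ldots,k,k+j}\bigr|_{k+i,k+j}$.

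The key step is the Gelfand--Retakh Sylvester identity for quasideterminants \cite{GR1}, which collapses iterated quasideterminants of bordered submatrices:
\begin{equation*}
\bigl|B^{i_1,\ldots,i_r}_{j_1,\ldots,j_r}\bigr|_{i_r,j_r}
=\bigl|M_{k+m|n}^{1,\ldots,k,k+i_1,\ldots,k+i_r}_{1,\ldots,k,k+j_1,\ldots,k+j_r}\bigr|_{k+i_r,k+j_r}
\end{equation*}
for $i_1<\cdots<i_r$ and $j_1<\cdots<j_r$. Specialising to the principal quasideterminants appearing in Theorem~\ref{quasi-decomp} produces $\psi_k\bigl(|M^{(l)}|_{ll}\bigr)=\bigl|M_{k+m|n}^{(k+l)}\bigr|_{k+l,k+l}$ for every $l=1,\ldots,m+n$, together with the matching identity for inverses via the homomorphism property.

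Applying $\psi_k$ factor-by-factor to Theorem~\ref{quasi-decomp} then gives exactly $\Ber_q(M_{k+m|n})$ with the initial segment $\bigl|M_{k+m|n}^{(1)}\bigr|_{11}\cdots\bigl|M_{k+m|n}^{(k)}\bigr|_{kk}$ removed from the left. Since the $k\times k$ top-left block $M^{1,\ldots,k}_{1,\ldots,k}$ sits entirely in the even-parity corner, the $q$-super Manin relations restricted to it reduce to the ordinary $q$-Manin relations, so the standard quasideterminant decomposition of the quantum determinant identifies that missing segment with $\det_q(M^{1,\ldots,k}_{1,\ldots,k})$; left-multiplying by its inverse produces the claimed identity. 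The main technical point to justify carefully is that $\psi_k$ commutes with quasideterminant formation, which rests on the invertibility of the relevant principal blocks inside the Hopf superalgebra $\widetilde{\Mc}_{k+m|n}$; once that is secured, the rest is a clean bookkeeping argument built on Theorem~\ref{quasi-decomp}, the Lemma, and the Gelfand--Retakh Sylvester identity.
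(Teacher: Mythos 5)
Your argument is correct, but it is organised differently from the paper's. The paper proves Theorem \ref{syl} in three short steps: it writes $M_{k+m|n}$ in block form with $M_{11}=M^{1,\ldots,k}_{1,\ldots,k}$ and $M_{22}=M$, invokes Schur's complement theorem (Theorem \ref{Sch com}) to get ${\det}_q(M^{1,\ldots,k}_{1,\ldots,k})^{-1}\Ber_q(M_{k+m|n})=\Ber_q\bigl(M_{22}-M_{21}M_{11}^{-1}M_{12}\bigr)$, and then observes from the definition of the quasideterminant that the Lemma identifies $\psi_k(M_{ij})$ with the $(i,j)$ entry of the Schur complement, so the homomorphism property of $\psi_k$ gives $\psi_k(\Ber_q(M))=\Ber_q\bigl(M_{22}-M_{21}M_{11}^{-1}M_{12}\bigr)$ at once. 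You instead bypass Theorem \ref{Sch com} entirely: you push $\psi_k$ through the factorisation of Theorem \ref{quasi-decomp} and use the Gelfand--Retakh Sylvester identity for quasideterminants to collapse $\bigl|B^{(l)}\bigr|_{ll}$ into $\bigl|M_{k+m|n}^{(k+l)}\bigr|_{k+l,k+l}$, then peel off the first $k$ factors as ${\det}_q(M^{1,\ldots,k}_{1,\ldots,k})$. Both routes are valid and rest on the same underlying quasideterminant technology (the paper's Schur complement theorem is itself proved via Jacobi's ratio theorem for quasideterminants), but the paper's version is more economical because the Sylvester-type collapse has already been packaged at the Berezinian level in Theorem \ref{Sch com}, whereas yours re-performs it factor by factor at the quasideterminant level and has to import the noncommutative Sylvester identity of \cite{GR1} as an extra external ingredient. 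Your closing caveat about invertibility of the principal blocks applies equally to both arguments and is implicitly assumed throughout the paper's treatment in $\widetilde{\Mc}_{m|n}$.
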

\begin{proof}
Write $M_{k+m|n}$ in block matrix,
\[
M_{k+m|n}=\left(\begin{array}{cc}
	M_{11}&M_{12}\\
	M_{21}&M_{22}\\
\end{array}\right)
\]
such that $M_{11}=M^{1,\ldots, k}_{1,\ldots, k}$, $M_{22}=M$ and $M_{12}$ and $M_{21}$ are submatrices of size $k\times (m+n)$ and $(m+n)\times k$ respectively.
By Schur's complement theorem \ref{Sch com}, we have that
\beq
{\det}_{q}(M^{1,\ldots, k}_{1,\ldots, k})^{-1} \Ber_q(M_{k+m|n})=\Ber_{q} \left( M-M_{21}M_{11}^{-1}M_{12} \right).
\eeq
Moreover, from the definition of quasideterminant,
\[
\psi_k(M_{ij})=(M-M_{21}M_{11}^{-1}M_{12})_{k+i,k+j}.
\]
Hence,
\beq
\psi_k(\Ber_q(M))=\Ber_{q} \left( M-M_{21}M_{11}^{-1}M_{12} \right).
\eeq
This completes the proof.
\end{proof}

%
%
%
%

\bigskip
\centerline{\bf Acknowledgments}
\medskip
The work is supported in part by the National Natural Science Foundation of China (grant nos.
12171303 and 112571026), the Simons Foundation (grant no. MP-TSM-0002518),
and the Fundamental Research Funds for the Central Universities (grant no. CCNU25JCPT031).

\bigskip

\bibliographystyle{amsalpha}

\begin{thebibliography}{ABC}
\bibitem{CSS}
S. Caracciolo, A. Sportiello, A. D. Sokal, {\em Noncommutative determinants, Cauchy-Binet formulae, and Capelli-type identities}, Electronic J. Combin. 16 (2009), 1.

\bibitem{CF}
A. Chervov, G. Falqui,
{\em Manin matrices and Talalaev's formula},
J. Phys. A 41 (2008), no. 19, 194006, 28.

\bibitem{CFR}
A. Chervov, G. Falqui, V. Rubtsov,
{\em Algebraic properties of Manin matrices. I},
Adv. Appl. Math. 43 (2009), 239-315.

\bibitem{CFRS}
A. Chervov, G. Falqui, V. Rubtsov, A. Silantyev,
{\em Algebraic properties of Manin matrices. II: q-analogues and integrable systems},
Adv. Appl. Math. 60 (2014), 25-89.

\bibitem{CT}
A. Chervov and D. Talalaev,
{\em Quantum spectral curves, quantum integrable systems and the geometric Langlands correspondence},
arXiv:hep-th/0604128.

\bibitem{GR1}
I. M. Gelfand, V. S. Retakh,
{\em Determinants of matrices over non commutative rings},
Funct. Anal. Appl. 25 (1991), 91-102.

\bibitem{GR2} 
I. M. Gelfand, V. S. Retakh,
{\em Quasideterminants. {I}}, Selecta Math. (N.S.) 3 (1997), 517-546.

%

\bibitem{HKL}
P. H. Hai, B. Kriegk, M. Lorenz,
{\em N-homogeneous  superalgebras},
J. Noncommut. Geom. 2 (2008), 1-51.



\bibitem{JLZ}	
N.~Jing, Y.~Liu, J.~Zhang,
{\em Quantum algebra of multiparameter Manin matrices},
 J. Algebra, 655 (2024), 586-618.

\bibitem{KL}
D. Krob, B. Leclerc,
{\em  Minor identities for quasi-determinants and quantum determinants},
Comm. Math. Phys. 169 (1995), 1-23.

\bibitem{Ma1}		
Y. I. Manin,
{\em Some remarks on Koszul algebras and quantum groups},
Ann. Inst. Fourier (Grenoble) 37 (1987), no. 4, 191-205.

\bibitem{Ma2}		
Y. I. Manin,
{\em Quantum groups and noncommutative geometry},
Universit\'e de Montr\'eal Centre de Recherches Math\'ematiques, Montreal, QC, 1988.

\bibitem{Ma3}
Yu. I. Manin,
{\em Multiparametric quantum deformation of the general linear supergroup}, Comm. Math. Phys. 123 (1989), 163-175.



\bibitem{Mo1}
{A.~Molev},
{\em Yangians for classical Lie algebras}.
Mathematical Surveys and Monographs
143. AMS, Providence, RI, 2007.

\bibitem{Mo2} A. Molev, {\em Sugawara operators for classical Lie algebras}, Math. Survey and Monographs, 229, Amer. Math. Soc., Providence, RI, 2018.

\bibitem{MR}
A. I. Molev, E. Ragoucy,
 {\em The MacMahon master theorem for right quantum superalgebras and higher sugawara operators for $\mathfrak{gl}(m|n)$},
Moscow Math. J. 14.1 (2014), 83-119.

\bibitem{RTF}
 N.~Y.~Reshetikhin, L.~A.~Takhtadzhyan, L.~D.~Faddeev, {\em Quantization of Lie groups and Lie algebras}, Algebra i Analiz
1 (1989), 178-206. English transl., Leningrad Math. J. 1 (1990), 193-225. 178-206 (1989).


\bibitem{Si}
A.~Silantyev, {\em Manin matrices for quadratic algebras}, SIGMA 17, 066, (2021).

\bibitem{Si1}
A.~Silantyev, {\em Manin matrices of type C: Multi-parametric deformation}, J. Geom. Phys. 177 (2022): 104523.

\bibitem{Ta}
D.V.~Talalaev, {\em The quantum Gaudin system},  Funct Anal Its Appl 40, 73-77 (2006).


\end{thebibliography}

\small

\noindent
Naihuan Jing\\
Department of Mathematics\\
North Carolina State University, Raleigh, NC 27695, USA\\
jing@ncsu.edu

\vspace{5 mm}

\noindent
Yinlong Liu\\
Department of Mathematics\\
Shanghai University, Shanghai 200444, China\\
yinlongliu@shu.edu.cn

\vspace{5 mm}

\noindent
Jian Zhang\\
School of Mathematics and Statistics\\
Central China Normal University, Wuhan, Hubei 430079, China\\
jzhang@ccnu.edu.cn

\end{document}